\tikzset{blue node/.style={circle,fill=blue!80,draw,minimum size=0.4cm,inner
sep=0pt}, }
\tikzset{red node/.style={circle,fill=red!80,draw,minimum size=0.4cm,inner
sep=0pt}, }
\tikzset{unc node/.style={circle,fill=gray!20,draw,minimum size=0.4cm,inner
sep=0pt}, }
\newtheorem{theorem}{Theorem}[section]
\newtheorem{lemma}[theorem]{Lemma}
\newtheorem{corollary}[theorem]{Corollary}
\theoremstyle{definition}
\newtheorem{definition}[theorem]{Definition}
\newtheorem{problem}[theorem]{Open problem}
\DeclareMathOperator{\dist}{d}
\newcommand{\authDetails}[3]{\author{\mbox{#1\,$^{\textrm{\href{mailto:#2}{\Letter}}\,\,\raisebox{-0.2ex}{\orcidlink{#3}}\,}$}}}
  \DeclareFontShape{T1}{clm2}{m}{scit}{<->ssub*clm2/m/scsl}{}%
\begin{document}

\title{On graph automorphisms related to \textsc{snort}}

\authDetails{Rylo Ashmore}{rashmore@mun.ca}{0009-0006-0728-0860}
\authDetails{Beth Ann Austin}{eaustin@mun.ca}{0009-0000-2805-6303}
\authDetails{Alfie M. Davies}{research@alfied.xyz}{0000-0002-4215-7343}
\authDetails{Danny Dyer}{dyer@mun.ca}{0000-0001-6921-1517}
\authDetails{William Kellough}{wskellough@mun.ca}{0009-0008-1746-3251}

\date{}
\affil{
    \small{
        Department of Mathematics and Statistics,\\
        Memorial University of Newfoundland,\\
        Canada
    }
}

\maketitle

\begin{abstract}
    \noindent
    We study the outcomes of various positions of the game \textsc{snort}. When
    played on graphs admitting an automorphism of order two that maps vertices
    outside of their closed neighbourhoods (called \emph{opposable} graphs),
    the second player has a winning strategy. We give a necessary and
    sufficient condition for a graph to be opposable, and prove that the
    property of being opposable is preserved by several graph products. We show
    examples that a graph being second-player win does not imply that the graph
    is opposable, which answers Kakihara's conjecture. We give an analogous
    definition to opposability, which gives a first-player winning strategy; we
    prove a necessary condition for this property to be preserved by the
    Cartesian and strong products. As an application of our results, we
    determine the outcome of \textsc{snort} when played on various $n\times m$
    chess graphs.
\end{abstract}

\section{Introduction}

Two farmers share a number of fields. One farmer would like to use the fields
for their bulls, and the other for their cows. They both agree that no bulls
should be adjacent to any cows. They decide that they will take turns in
picking fields. This is the game of \textsc{snort}, devised by Simon Norton,
and first written about in \cite[pp.~91--92]{conway:onag} and
\cite[pp.~145--147]{berlekamp.conway.guy:wwv1} in roughly the same way we have
just described it. Being competitive farmers (or perhaps unfriendly), they each
want to make sure that they have the most  fields, restricting the choice of
the other farmer. As such, if a farmer, on their turn, has no field to pick
(either because every field has been claimed already, or because picking an
unclaimed field would place bulls next to cows), then they are said to lose. To
the initiated reader, this is the \emph{normal play} convention in
Combinatorial Game Theory. This is evidently a two-player, combinatorial game.

We will not talk of bulls, cows, and fields when we play this game. Instead, as
is typically done, we will consider a game of \textsc{snort} to be played on a
finite, simple graph. Enforcing the graph to be simple is reasonable, since
adding loops or multiple edges would not affect the play of the game in any
way, as the reader may confirm to themselves. Enforcing finiteness, however,
does change things; the brave reader may wish to investigate the transfinite
case.

The two players, Left and Right, will alternate between colouring uncoloured
vertices blue and red respectively. We say that a vertex is \emph{tinted} blue
(respectively red) if it is adjacent to a blue (respectively red) vertex but is
not itself coloured. Since Left colours vertices blue, Left is never allowed to
colour a vertex that is tinted red. Similarly, Right is never allowed to colour
a vertex that is tinted blue. The first player who is unable to colour a vertex
on their turn loses the game. Note that we assume all vertices of a graph start
untinted unless otherwise specified.

The attentive reader may ask whether the graph should be planar, given that
this would be the layout of the fields in the original description, and indeed
this is how the game was first played and analyzed (see, for example,
\cite[p.~91]{conway:onag}). As such, the ruleset we have just described should
be considered a generalization of \textsc{snort}, but it is a standard one.

In this paper, we analyze the outcomes of \textsc{snort}, starting with graphs
in which the second player has a winning strategy. To do this, we make use of a
structural property of graphs, introduced by Kakihara \cite{kakihara:snort}, to
give a player a strategy in which they can ``mirror'' their opponent's previous
move (much in the same vein as a typical \emph{Tweedledum and Tweedledee}
argument \cite[p.~3]{berlekamp.conway.guy:wwv1}). A graph $G$ is
\emph{opposable} if it contains an automorphism $f$ of order two such that
$f(v)\notin N[v]$ for all $v\in V(G)$. We will say that a graph is
\emph{non-opposable} if it is not opposable. Such an automorphism, in this case
$f$, is called an \emph{opposition} of $G$. It is known that the second player
wins \textsc{snort} on an opposable graph (see \cite[Proposition 3.6 on
p.~24]{kakihara:snort} or \cref{sec: opposable graphs} for a proof). Kakihara
\cite{kakihara:snort} and Arroyo \cite{arroyo:dawsons} both independently
studied such graphs, finding various nice families. Among his results, Arroyo
\cite{arroyo:dawsons} showed that the property of being opposable is preserved
by the Cartesian product. In \cref{sec: opposable graphs}, we further develop
Arroyo and Kakihara's works by finding new properties and behaviours of
opposable graphs, including a generalization of Arroyo's graph product result.

Although opposability necessarily implies that the second player wins, it is
unknown exactly how having a winning strategy for the second player and being
opposable differ. That is, for which non-opposable graphs does the second
player win? Kakihara conjectured in \cite{kakihara:snort} that a graph is
opposable if and only if the second player has a winning strategy. In
\cref{sec: non-opposable 2nd player win graphs}, we show that this conjecture
is false, and do so by giving infinite families of counterexamples.

Arroyo \cite{arroyo:dawsons} also defined a parallel concept to opposability,
trying to begin to classify graphs for which the first player wins
\textsc{snort}. In \cref{sec: almost opposable graphs}, we generalize this
definition to what we call \emph{almost opposable graphs}, and explore how this
property behaves with respect to various graph products.

In \cref{sec: peaceable queens game}, we demonstrate an application of our
results to a natural family of games involving placing chess pieces onto a
chessboard such that no two pieces can attack each other. These types of chess
problems have been studied for centuries, particularly for placing queens. For
a list of results, a list of real-world applications, and a summary of the
history, we direct the reader to the excellent survey by Bell and Stevens
\cite{bell.stevens:survey}. The sequence-obsessed reader may be aware that the
$n$th number in the sequence A250000 in the OEIS is the maximum value $m$ such
that $m$ white queens and $m$ black queens can be placed on an $n\times n$
chessboard such that no two queens of opposite colour can attack each other.
See \cite{sloane:on-line} for some relevant results on sequence A250000, and
\cite{yao.zeilberger:numerical} for a study on a continuous version of the
sequence. We consider a gamified, adversarial version of the sequence A250000,
which we call the Peaceable Queens Game, where two players are each assigned a
colour and take turns placing a queen of their assigned colour on a square of
an $n\times m$ chessboard. This turns out to be a special case of
\textsc{snort}. We determine the outcome of the Peaceable Queens Game for
chessboards with at least one odd side. We also determine the outcomes of
analogous peaceable games played with other chess pieces.

Finally, in \cref{sec: further directions}, we end with some open questions and
directions for future work. For graph theorists unfamiliar with combinatorial
game theory, we suggest Siegel's textbook \cite{siegel:combinatorial}.

\section{Opposable Graphs}
\label{sec: opposable graphs}

For completeness, we begin with Kakihara's proof \cite{kakihara:snort} that the
second player wins \textsc{snort} on opposable graphs---note that Kakihara in
fact cites a preprint by Stacey Stokes and Mark D.\ Schlatter, but that work
does not seem to have appeared anywhere, and the authors could not be reached.
The technique used in this proof will be utilized throughout the paper.

\begin{lemma}[{\cite[Proposition 3.6 on p.~24]{kakihara:snort}}]
    \label{lem: opposable implies second player wins snort}
    If $G$ is a graph that admits an opposition $f$, then playing
    \textsc{snort} on $G$ is second-player win.
\end{lemma}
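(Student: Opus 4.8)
The plan is to give the second player an explicit mirroring strategy, in the spirit of a Tweedledum--Tweedledee argument. Since swapping the two colours (and hence the two players) is a symmetry of the rules of \textsc{snort}, it suffices to treat the case where Left, colouring blue, moves first and Right, colouring red, moves second. Right's prescribed response is simple: whenever Left colours a vertex $v$ (necessarily blue), Right colours $f(v)$ (necessarily red).

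I would then track the following invariant by induction on the number of completed rounds: immediately after each of Right's moves, the set $R$ of red vertices equals $f(B)$, where $B$ is the set of blue vertices; equivalently, since $f^2=\mathrm{id}$, we also have $B=f(R)$. This holds vacuously before any move is made, as both sets are empty.

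For the inductive step, I would assume the invariant holds, let Left legally colour some uncoloured vertex $v$ that is not tinted red, and check that Right may legally colour $f(v)$ and thereby restore the invariant. Restoring the invariant is immediate, since the new red set is $f(B)\cup\{f(v)\}=f(B\cup\{v\})$. Legality splits into two claims. First, $f(v)$ is uncoloured: it differs from $v$ because $f(v)\notin N[v]\ni v$, and it lies in neither $B$ nor $R=f(B)$ because $v$ was uncoloured before Left's move and $f$ is an involution. Second, $f(v)$ is not tinted blue: it is not adjacent to the newly blue vertex $v$ because $f(v)\notin N[v]$, and if it were adjacent to some $b\in B$, then applying the automorphism $f$ would make $v=f(f(v))$ adjacent to $f(b)\in f(B)=R$, contradicting that $v$ was not tinted red when Left played it.

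The proof then concludes by noting that \textsc{snort} on a finite graph must terminate, and that the above shows Right always has a legal reply to any move of Left; hence Left is the player who first runs out of moves, so the position is second-player win. There is no serious obstacle here --- the one thing to be careful about is the bookkeeping in the legality check, ensuring that every way in which $f(v)$ could fail to be a legal target for Right is excluded by the opposition hypothesis together with the induction hypothesis. It is worth highlighting that the condition $f(v)\notin N[v]$ gets used twice for genuinely different reasons: once to guarantee $f(v)\neq v$, and once to guarantee that $v$ and $f(v)$ are non-adjacent.
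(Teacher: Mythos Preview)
Your proof is correct and follows essentially the same mirroring argument as the paper: both prescribe the second player's response $f(v)$ and verify inductively that it is always legal by ruling out the two possible obstructions (that $f(v)$ is already coloured, or that $f(v)$ is adjacent to a vertex of the first player's colour). Your explicit invariant $R=f(B)$ is a slightly cleaner bookkeeping device than the paper's narrative tracking of past moves, but the substance is identical.
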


\begin{proof}
    We will show by induction that a winning strategy for the second player is
    to always respond with $f(v)$ when the first player plays on $v$. On the
    first move, since $f$ is an opposition, the second player responding by
    colouring $f(v)$ is a valid move.

    Suppose that play continues in this way, but that at some point, when the
    first player plays to some $u$, the second player cannot respond to $f(u)$.
    The only way that the second player would be unable to play $f(u)$ is if
    either $f(u)$ is already coloured, or $f(u)$ is adjacent to some vertex $w$
    that the first player already claimed.

    Suppose first that $f(u)$ is already coloured. If $f(u)$ was coloured by
    the second player, then, since $f$ is an automorphism, it must be because
    the first player played $u$ at some earlier point in the game. However,
    this contradicts the fact that $u$ was just played by the first player.
    Otherwise, if $f(u)$ was coloured by the first player, then the second
    player would have responded at the time on $f(f(u)) = u$. This contradicts
    the fact that the first player just played on $u$.

    It must therefore be the case that $f(u)$ is adjacent to some vertex $w$
    that the first player already claimed. When the first player played on $w$,
    the second player would have responded to $f(w)$. Since $f$ is an
    automorphism and $\{f(u),w\}$ is an edge, $\{f(f(u)),f(w)\}$ is an edge.
    Since $f$ has order two, we obtain that $\{u,f(w)\}$ is an edge. This
    contradicts the fact that the first player just played on $u$ since the
    first player's move would be adjacent to the earlier move $f(w)$ by the
    second player.

    Thus, whenever the first player makes a move, the second player will be
    able to respond. So the player to run out of moves first will be the first
    player. Therefore $G$ is second-player win.
\end{proof}

Next, we establish a way of showing that graphs that partition into opposable
graphs are themselves opposable. To do this, we introduce a definition
involving matchings. We will see in \cref{thm:matchingCharacterization} that
this new definition gives a structural characterization of graphs that admit an
opposition.

\begin{definition}
    A perfect matching $M$ is an \emph{opposition matching} if for any two
    edges $v_1v_2\neq v_3v_4$ in $M$, the graph $H$ induced by
    $\{v_1,v_2,v_3,v_4\}$ is isomorphic to either $K_4$, $C_4$, or just the two
    edges of $M$.
\end{definition}

\begin{theorem}\label{thm:matchingCharacterization}
    A graph $G$ has an opposition if and only if the complement $\overline{G}$
    has an opposition matching.
\end{theorem}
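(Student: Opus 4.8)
The plan is to translate the two properties into each other by relating an order-two automorphism $f$ of $G$ to the set of two-element orbits of $f$, viewed as a matching in $\overline{G}$. First I would observe that an opposition $f$ of $G$ is fixed-point-free: if $f(v)=v$ then $v\in N[v]$ trivially, contradicting the defining condition $f(v)\notin N[v]$. So an opposition partitions $V(G)$ into pairs $\{v,f(v)\}$, and the condition $f(v)\notin N[v]$ says precisely that each such pair is a non-edge of $G$, i.e.\ an edge of $\overline{G}$. Thus the orbits of an opposition form a perfect matching $M$ of $\overline{G}$; the task is to check that $M$ is an opposition matching, and conversely that any opposition matching of $\overline{G}$ arises this way.

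For the forward direction, take two distinct edges $\{v_1,v_2\}$ and $\{v_3,v_4\}$ of $M$, so $f$ swaps $v_1\leftrightarrow v_2$ and $v_3\leftrightarrow v_4$. Since $f$ is an automorphism of $G$ (equivalently of $\overline{G}$), it acts on the induced subgraph $H$ on $\{v_1,v_2,v_3,v_4\}$ as an automorphism that swaps the two matching edges. I would then enumerate: the four ``cross'' pairs are $\{v_1,v_3\},\{v_1,v_4\},\{v_2,v_3\},\{v_2,v_4\}$, and $f$ acts on these four pairs by the double transposition, pairing $\{v_1,v_3\}$ with $\{v_2,v_4\}$ and $\{v_1,v_4\}$ with $\{v_2,v_3\}$. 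Because $f$ preserves adjacency, within each such pair the two cross-pairs are either both edges of $G$ or both non-edges. A short case analysis on how many of the two ``diagonal classes'' are edges (zero, one, or both) shows $H\cong$ (two disjoint edges), $C_4$, or $K_4$ respectively — matching the definition. I should double-check the $C_4$ case, namely that when exactly one diagonal class consists of edges the resulting graph on four vertices with the two $M$-edges present is indeed a $4$-cycle and not something else; this is the one spot where being careful about which pairs are adjacent matters.

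For the converse, given an opposition matching $M$ of $\overline{G}$, define $f$ to swap the endpoints of each edge of $M$; then $f$ has order two, is fixed-point-free, and $f(v)\notin N[v]$ since $\{v,f(v)\}\in E(\overline{G})$. The real content is showing $f\in\mathrm{Aut}(G)$, equivalently $f\in\mathrm{Aut}(\overline{G})$. I would verify that $f$ preserves adjacency in $\overline{G}$ by considering an arbitrary pair of vertices and its image: if both lie in the same $M$-edge, $f$ fixes the pair; otherwise they lie in two distinct $M$-edges $\{v_1,v_2\}$, $\{v_3,v_4\}$ and the pair together with its $f$-image is one of the ``diagonal classes'' above, so by the $K_4/C_4/2K_2$ hypothesis on the induced $H$ the two pairs in a class have the same adjacency status in $G$ (hence in $\overline{G}$) — which is exactly $f$-invariance of adjacency. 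The main obstacle, and where I would spend the most care, is organizing this case analysis cleanly so that the three allowed isomorphism types of $H$ correspond transparently to ``the two cross-pairs in each diagonal class agree,'' in both directions; once that dictionary is set up, both implications are short.
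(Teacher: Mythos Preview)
Your proposal is correct and follows essentially the same approach as the paper: both directions hinge on the observation that the involution $f$ pairs the four cross-pairs $\{v_1,v_3\},\{v_2,v_4\}$ and $\{v_1,v_4\},\{v_2,v_3\}$ into two classes whose adjacency status must agree, and the three allowed isomorphism types $K_4$, $C_4$, $2K_2$ for $H$ are precisely the graphs on four vertices containing the two $M$-edges in which this agreement holds. The only cosmetic difference is that for the converse the paper verifies $f\in\mathrm{Aut}(G)$ directly by taking a $G$-edge and running through the three cases, whereas you verify $f\in\mathrm{Aut}(\overline G)$ via the diagonal-class dictionary; these are equivalent.
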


\begin{proof}
    Let $G$ have an opposition, say $f$. Then in $\overline{G}$ we construct a
    matching $M$ by matching $v$ to $f(v)$. Since $f$ has order two and no
    fixed points, this is well-defined. Since oppositions match vertices to
    non-adjacent vertices, they must be adjacent in the complement, and thus
    are a valid edge of a matching. Since $f$ is an automorphism, we get that
    it is a bijection, and thus $M$ is a perfect matching. Let $v_1v_2$ and
    $v_3v_4$ be two edges of $M$.

    If $v_1v_3$ is an edge in $G$, then so too is $v_2v_4$ because $f$ is an
    automorphism. By the contrapositive, if $v_2v_4$ is an edge in
    $\overline{G}$, then $v_1v_3$ is an edge in $\overline{G}$. Because $f$ is
    an order two automorphism, this is an if and only if argument. The edge
    pair $v_1v_3$ and $v_2v_4$ either both exist, or both do not exist.
    Similarly, the edge pair $v_1v_4$ and $v_2v_3$ either both exist, or both
    do not exist. If no edge pairs exist in $\overline{G}$, then the subgraph
    $H$ induced by $\{v_1,v_2,v_3,v_4\}$ is just $v_1v_2+v_3v_4$. If exactly
    one edge pair exists in $\overline{G}$, then $H \cong C_4$, and if both
    edge pairs exist in $\overline{G}$, then $H \cong K_4$.

    Conversely, suppose $\overline{G}$ has an opposition matching $M$. Define a
    function $f:V(G)\to V(G)$ by mapping $v\in V(G)$ to the matched vertex in
    $M$. We claim $f$ is an opposition in $G$. Let $v_1v_2$ be an edge of $G$,
    and let $f(v_1)f(v_2)=v_3v_4$. Then $v_1v_3$ and $v_2v_4$ are edges in $M$.
    Since $M$ is an opposition matching, the subgraph $H$ induced in
    $\overline{G}$ is isomorphic to either $K_4$, $C_4$, or just the two edges. 
    \begin{itemize}
        \item
            If $H\cong K_4$, then $v_1v_2\not\in E(G)$, and this contradicts
            $v_1v_2\in E(G)$ assumed.
        \item
            If $H\cong C_4$, then $v_1v_2\in E(G)$ implies that the two edges
            not in $H$ are $v_1v_2$ and $v_3v_4$. Thus $v_3v_4$ is an edge of
            $G$.
        \item
            If $H$ is just $v_1v_3 + v_2v_4$, then $v_3v_4$ is not an edge of
            $H$, and so $v_3v_4\in E(G)$.
    \end{itemize}
    It follows that $f$ is an edge-preserving function. Furthermore, $f$ is a
    bijection because it was constructed through a perfect matching. Thus $f$
    is an automorphism. Because $M$ is a perfect matching, the function $f$ has
    order two and has no fixed points. Since $f$ is based on a perfect
    matching, the edge $vf(v)$ always exists in $\overline{G}$, and thus does
    not exist in $G$. Thus the automorphism does not map vertices to adjacent
    vertices. So $f$ is an opposition of $G$.
\end{proof}

Now we can reason about graphs that nicely partition. We use $2K_2$ to denote
the graph on 4 vertices made up of two disjoint edges and $4K_1$ to be the
empty graph on 4 vertices.

\begin{theorem}
    \label{thm: decomposition?}
    If $G$ can be vertex-partitioned into sets $V_1,\dots, V_{k}$ such that the
    subgraph induced by $V_i$ is isomorphic to an opposable graph $H_i$, and
    for any pair of paired vertices, $\{u_i,v_i\}\subseteq V_i$ and
    $\{u_j,v_j\}\subseteq V_j$, the set $\{u_i, v_i, u_j, v_j\}$ induces either
    a $C_4$, a $2K_2$, or a $4K_1$, then $G$ is opposable.
\end{theorem}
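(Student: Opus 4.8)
The plan is to route everything through the matching characterisation of \cref{thm:matchingCharacterization}. For each $i$, the induced subgraph $G[V_i]\cong H_i$ is opposable, so its complement $\overline{G[V_i]}=\overline{G}[V_i]$ carries an opposition matching $M_i$, which we may regard as a matching of $\overline{G}$ saturating $V_i$; concretely $M_i$ consists of the pairs $\{v,f_i(v)\}$ where $f_i$ is an opposition of $H_i$, so the ``pairs of paired vertices'' appearing in the hypothesis are exactly the edges of the $M_i$. Set $M=\bigcup_{i=1}^{k}M_i$. Since the $V_i$ partition $V(G)$ and each $M_i$ is a perfect matching of $\overline{G}[V_i]$, the set $M$ is a perfect matching of $\overline{G}$. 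By \cref{thm:matchingCharacterization} it then suffices to show that $M$ is an opposition matching of $\overline{G}$.

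So I would take two distinct edges $v_1v_2$ and $v_3v_4$ of $M$ and analyse $H:=\overline{G}[\{v_1,v_2,v_3,v_4\}]$, aiming to show it is $K_4$, $C_4$, or $2K_2$. If both edges lie in a common $M_i$, this is immediate from $M_i$ being an opposition matching of $\overline{G}[V_i]$. Otherwise $v_1v_2\in M_i$ and $v_3v_4\in M_j$ with $i\neq j$; then $\{v_1,v_2\}\subseteq V_i$ and $\{v_3,v_4\}\subseteq V_j$ are paired vertices of their respective parts, so the hypothesis forces $G[\{v_1,v_2,v_3,v_4\}]$ to be one of $C_4$, $2K_2$, or $4K_1$. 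Taking complements on these four vertices, $H$ is respectively $2K_2$, $C_4$, or $K_4$ — exactly the three admissible configurations. (In the $C_4$ case I would note that the two non-edges of that $C_4$ must be $v_1v_2$ and $v_3v_4$, since those are non-edges of $G$ — being edges of $\overline{G}$ — and $C_4$ has only two non-edges; hence the resulting $2K_2$ is precisely the pair of $M$-edges, as required by the definition.) This shows $M$ is an opposition matching, and \cref{thm:matchingCharacterization} yields that $G$ is opposable.

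The only thing that needs care is lining up the three-way correspondence between oppositions $f_i$ of $H_i$, opposition matchings $M_i$ of $\overline{G}[V_i]$, and the ``paired vertices'' of the hypothesis, together with the elementary observation that complementation on four vertices carries the list $\{C_4, 2K_2, 4K_1\}$ precisely onto the list $\{2K_2, C_4, K_4\}$ in the definition of opposition matching; once these are in place the argument is just the short case split above. A direct alternative would be to glue the involutions $f_i$ into a single order-two automorphism $f$ of $G$ and verify edge-preservation and the $f(v)\notin N[v]$ condition by hand from the hypothesis, but passing through \cref{thm:matchingCharacterization} is cleaner and avoids re-deriving those non-adjacency conditions.
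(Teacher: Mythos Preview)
Your proof is correct, and it takes a genuinely different route from the paper. The paper does precisely what you sketch in your final paragraph as the ``direct alternative'': it glues the individual oppositions into a single map $\psi$ on $V(G)$ via $\psi(v)=f_i^{-1}\circ\varphi_i\circ f_i(v)$ for $v\in V_i$, and then checks by hand that $\psi$ is an order-two fixed-point-free automorphism with $\psi(v)\notin N[v]$, using the $C_4/2K_2/4K_1$ hypothesis to handle edges between different parts.

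Your approach instead routes everything through \cref{thm:matchingCharacterization}: you assemble the union $M=\bigcup_i M_i$ of opposition matchings in the complements $\overline{G}[V_i]$ and verify the opposition-matching condition pairwise, using the elementary observation that complementation on four vertices carries $\{C_4,2K_2,4K_1\}$ onto $\{2K_2,C_4,K_4\}$. This is cleaner in that it reuses machinery already established and reduces the verification to a single short case split, whereas the paper's argument is more self-contained but must re-derive the automorphism and non-adjacency properties from scratch. Your handling of the $C_4\to 2K_2$ case (pinning down which two edges survive) is exactly the care that is needed; the other two complementations are immediate.
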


\begin{proof}
    Let $G$ be partitioned into $V_i$ and let $f_i:V_i\rightarrow V(H_i)$ be an
    isomorphism between the subgraph induced by $V_i$ and $H_i$. Let
    $\varphi_i:V(H_i)\rightarrow V(H_i)$ be an opposition on $H_i$. We create a
    function $\psi:V(G)\rightarrow V(G)$ by mapping $v\in V_i$ by
    $\psi(v)=f_i^{-1}(\varphi_i(f_i(v)))$ for each $i\in \{1,\dots, k\}$ and
    claim it is an opposition. Note that $\psi$ has no fixed points since
    $\varphi_i$ has no fixed points and $f_i$ is an isomorphism. It also
    follows from the construction that $\psi$ has order two, and does not map
    vertices to adjacent vertices, otherwise $\varphi_i$ would not be an
    opposition on $H_i$.

    It remains to show that $\psi$ preserves edges so that it is a proper
    automorphism. Since $\varphi_i$ is an automorphism, we find that edges
    within $V_i$ are preserved. If two vertices $v_i \in V_i$ and $v_j \in V_j$
    with $V_i \neq V_j$ are not adjacent (and $v_i\psi(v_j)\not\in E(G)$), then
    $\{v_i, v_j, \psi(v_i), \psi(v_j)\}$ induce a $4K_1$. Suppose instead that
    $v_iv_j$ is an edge between $V_i$ and $V_j$, with $i \neq j$. Let $u_i,u_j$
    be the corresponding paired vertices. Then, by the assumptions made in the
    statement of the theorem, $\{u_i,v_i,u_j,v_j\}$ induce either a $C_4$ or
    $2K_2$ (a $4K_1$ is not possible as the edge $v_iv_j$ is assumed to exist),
    and in either case there must be an edge $u_iu_j$.
\end{proof}

\begin{definition}
    Let $\varphi(g_1,g_2,h_1,h_2)$ be any logical formula consisting of atomics
    $g_1g_2\in E(G)$, $h_1h_2\in E(H)$, $g_1=g_2$, $h_1=h_2$, and connectives
    $\wedge$, $\vee$, $\neg$, and $\Rightarrow$. We abuse graph product
    notation and define $G\varphi H$ to be the graph on vertices $V(G)\times
    V(H)$ where $(g_1,h_1)(g_2,h_2)\in E(G\varphi H)$ if and only if
    $\varphi(g_1,g_2,h_1,h_2)$.
\end{definition}

We provide some definitions of common graph products in terms of this notation.
\begin{center}
    \begin{tabular}{|c|c|c|}
    \hline
    \makecell{Name} & \makecell{Symbol} & \makecell{Corresponding
    $\varphi(g_1,g_2,h_1,h_2)$}\\\hline\hline

    \makecell{Cartesian} & \makecell{$G\square H$} & \makecell{$(g_1=g_2\wedge
    h_1h_2\in E(H))\vee$\\$(g_1g_2\in E(G)\wedge h_1=h_2)$}\\\hline

    \makecell{Strong} & \makecell{$G\boxtimes H$} & \makecell{$(g_1=g_2\wedge
    h_1h_2\in E(H))\vee$\\$(g_1g_2\in E(G)\wedge h_1=h_2)\vee$ \\$(g_1g_2\in
    E(G)\wedge h_1h_2\in E(H))$}\\\hline

    \makecell{Tensor} & \makecell{$G\times H$} & \makecell{$g_1g_2\in
    E(G)\wedge h_1h_2\in E(H)$}\\\hline

    \makecell{Lexicographic} & \makecell{$G\bullet H$} & \makecell{$g_1g_2\in
    E(G)\vee\left(g_1=g_2\wedge h_1h_2\in E(H)\right)$}\\\hline

    \makecell{Co-normal} & \makecell{$G*H$}& \makecell{$g_1g_2\in E(G)\vee
    h_1h_2\in E(H)$}\\\hline

    \makecell{Homomorphic} & \makecell{$G\ltimes H$} & \makecell{$g_1=g_2
    \vee$\\ $\left( g_1g_2\in E(G) \wedge \neg \left( h_1h_2\in E(H) \right)
    \right)$}\\\hline
    
    \makecell{Cihpromomoh} & \makecell{$G\rtimes H$} & \makecell{$h_1=h_2 \vee$\\
    $\left( h_1h_2\in E(H) \wedge \neg \left( g_1g_2\in E(G) \right)
    \right)$}\\\hline
    \end{tabular}
\end{center}

For a given graph product $\varphi$ and vertex $v\in V(H)$, the graph $G.v
\subseteq G\varphi H$ is defined as the subgraph induced by the set of vertices
$\{(u,v)\mid u\in V(G)\}$. The subgraph $u.H$ is defined similarly. For more on
graph products we direct the reader to \cite{imrich.klavzar:product}.

We can use \cref{thm: decomposition?} to prove that graph products preserve
opposability. However, it is cleaner to show this by working directly from the
definition of a graph product.

\begin{theorem}
    \label{thm:graphProdOpposableLogic}
    If $\varphi$ satisfies 
    \[
        ((h_1=h_2)\wedge\neg(g_1g_2\in E(G)))\Rightarrow\neg
        \varphi(g_1,g_2,h_1,h_2)
    \]
    and $G$ is opposable, then $G\varphi H$ is opposable.
\end{theorem}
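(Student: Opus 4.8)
The plan is to lift an opposition of $G$ to the product by having it act only on the first coordinate. Concretely, let $f$ be an opposition of $G$ and define $\psi\colon V(G\varphi H)\to V(G\varphi H)$ by $\psi(g,h)=(f(g),h)$. Since $f$ has order two this is immediate for $\psi$, and since $f$ has no fixed points neither does $\psi$; so it remains to verify that $\psi$ is an automorphism of $G\varphi H$ and that $\psi(v)\notin N[v]$ for every vertex $v$.

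For the automorphism claim I would show that $\varphi(g_1,g_2,h_1,h_2)$ and $\varphi(f(g_1),f(g_2),h_1,h_2)$ have the same truth value for all $g_1,g_2\in V(G)$ and $h_1,h_2\in V(H)$, which says exactly that $\psi$ preserves and reflects adjacency. The argument is a structural induction on the formula $\varphi$. At the atomic level: because $f$ is a bijection, $g_1=g_2$ holds if and only if $f(g_1)=f(g_2)$; because $f$ is a graph automorphism of $G$, $g_1g_2\in E(G)$ holds if and only if $f(g_1)f(g_2)\in E(G)$; and the atomics $h_1=h_2$ and $h_1h_2\in E(H)$ are literally unchanged since $\psi$ fixes the $H$-coordinates. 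Hence every atomic subformula has the same truth value before and after applying $f$ to the two $G$-coordinates, and the connectives $\wedge$, $\vee$, $\neg$, $\Rightarrow$ propagate this equivalence up the parse tree. Thus $\psi$ is a bijection on vertices preserving edges in both directions, i.e.\ an automorphism.

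Finally, to see that $\psi$ sends no vertex into its own closed neighbourhood, fix $(g,h)$. Then $\psi(g,h)=(f(g),h)\neq(g,h)$ (no fixed points), and it shares the $H$-coordinate $h$ with $(g,h)$, while $g\,f(g)\notin E(G)$ because $f$ is an opposition. Applying the hypothesis $((h_1=h_2)\wedge\neg(g_1g_2\in E(G)))\Rightarrow\neg\varphi(g_1,g_2,h_1,h_2)$ with $g_1=g$, $g_2=f(g)$, $h_1=h_2=h$ yields $\neg\varphi(g,f(g),h,h)$, so $(g,h)$ and $\psi(g,h)$ are non-adjacent, giving $\psi(g,h)\notin N[(g,h)]$. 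Combining the three parts, $\psi$ is an opposition of $G\varphi H$.

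I do not expect a genuine obstacle here; the only points requiring care are making the structural induction on $\varphi$ explicit (and observing that it needs nothing about $f$ beyond being an automorphism and a bijection), and noting that the displayed hypothesis on $\varphi$ is used exactly once — precisely at the step where the bare argument for $G$ would otherwise fail to guarantee $\psi(v)\notin N[v]$. It is worth remarking that the Cartesian product satisfies the hypothesis, so this recovers Arroyo's result that opposability is preserved by $\square$, and one can check the same for $\boxtimes$, $\times$, $\bullet$, $\ltimes$, and $\rtimes$.
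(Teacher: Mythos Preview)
Your proof is correct and essentially identical to the paper's: both lift the opposition $f$ of $G$ to $(g,h)\mapsto(f(g),h)$, argue that the truth value of $\varphi$ is preserved because $f$ is an automorphism (you make the structural induction on $\varphi$ explicit where the paper leaves it implicit), and invoke the displayed hypothesis exactly once to get non-adjacency of $(g,h)$ and $(f(g),h)$.

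One small slip in your closing remark: the product $\rtimes$ does \emph{not} satisfy the hypothesis, since $h_1=h_2$ already makes $\varphi$ true regardless of $g_1g_2$; the paper in fact uses $C_4\rtimes P_3$ as a non-opposable example. The co-normal product $G*H$, which you omitted, does satisfy it.
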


\begin{proof}
    Let $f$ be an opposition of $G$. Define $\hat{f}:V(G\varphi H)\rightarrow
    V(G\varphi H)$ by $\hat{f}((g,h))=(f(g),h)$. Since $f$ is an order two
    bijection with no fixed points, $\hat{f}$ is also an order two bijection
    with no fixed points. Let $(g,h)\in V(G\varphi H)$. Then
    $\hat{f}((g,h))=(f(g),h)$. Since $gf(g)\not\in E(G)$ and $h=h$, we obtain
    that $\neg \varphi(g,f(g),h,h)$, and so $(g,h) \not\sim_{G\varphi H}
    \hat{f}((g,h))$. Finally, let $(g_1,h_1)(g_2,h_2)$ be an edge in $G\varphi
    H$. Note that $h_1,h_2$ are unchanged by $\hat{f}$, and $\hat{f}$ changes
    $g_1$ and $g_2$ to $f(g_1)$ and $f(g_2)$ respectively. Since $f$ is an
    automorphism, $g_1=g_2$ if and only if $f(g_1)=f(g_2)$ and $g_1g_2\in E(G)$
    if and only if $f(g_1)f(g_2)\in E(G)$. Thus $\varphi(g_1,g_2,h_1,h_2)$ is
    satisfied if and only if $\varphi(f(g_1),f(g_2),h_1,h_2)$ is satisfied.
    Since by assumption $(g_1,h_1)(g_2,h_2)\in E(G\varphi H)$, we obtain that
    $\hat{f}$ is a homomorphism. Thus $\hat{f}$ is an opposition, and $G\varphi
    H$ is opposable.
\end{proof}

\begin{corollary}
    \label{cor: graph products preserve opposable}
    If $G$ is opposable and $H$ is any graph, then the Cartesian product
    $G\square H$, the strong product $G\boxtimes H$, the tensor product
    $G\times H$, the lexicographic product $G\bullet H$, the co-normal product
    $G*H$, and the homomorphic product $G\ltimes H$ are all opposable.
\end{corollary}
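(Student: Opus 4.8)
The plan is to obtain every case as an instance of \cref{thm:graphProdOpposableLogic}. Since $G$ is assumed opposable, it suffices to verify, for the formula $\varphi$ attached to each of the six products in the table, that
\[
    ((h_1=h_2)\wedge\neg(g_1g_2\in E(G)))\Rightarrow\neg\varphi(g_1,g_2,h_1,h_2).
\]
Two trivial observations do all the work: because $H$ is a simple graph there are no loops, so $h_1=h_2$ implies $h_1h_2\notin E(H)$; and an edge joins two distinct vertices, so in the situation of the hypothesis (where $h_1=h_2$) we may take $g_1\neq g_2$. I would substitute $h_1=h_2$ and $g_1g_2\notin E(G)$ into each $\varphi$ and check the result evaluates to \emph{false}.

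For the Cartesian product, $g_1=g_2\wedge h_1h_2\in E(H)$ is false since $h_1h_2\notin E(H)$, and $g_1g_2\in E(G)\wedge h_1=h_2$ is false since $g_1g_2\notin E(G)$; the strong product adds only $g_1g_2\in E(G)\wedge h_1h_2\in E(H)$, false for the same reasons. For the tensor product $g_1g_2\in E(G)\wedge h_1h_2\in E(H)$ is already false from $g_1g_2\notin E(G)$ alone. For the lexicographic product, $g_1g_2\in E(G)$ is false and $g_1=g_2\wedge h_1h_2\in E(H)$ is false because $h_1h_2\notin E(H)$; and for the co-normal product, $g_1g_2\in E(G)$ is false and $h_1h_2\in E(H)$ is false for the same reason. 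In each of these five cases $\varphi$ is false, so \cref{thm:graphProdOpposableLogic} applies.

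The only case needing a moment's care is the homomorphic product $G\ltimes H$, whose formula $g_1=g_2\vee(g_1g_2\in E(G)\wedge\neg(h_1h_2\in E(H)))$ has the disjunct $g_1=g_2$, which the bare hypothesis does not forbid. Here I would use the second observation above---equivalently, note that in the proof of \cref{thm:graphProdOpposableLogic} the condition is only ever invoked with $g_2=f(g_1)$, and an opposition has no fixed points, so $g_1\neq g_2$ there; then $g_1=g_2$ is false and the other disjunct is false because $g_1g_2\notin E(G)$, giving $\neg\varphi$. (This also explains why the cihpromomoh product $G\rtimes H$ is omitted from the list: for it $h_1=h_2$ makes $\varphi$ true outright, so \cref{thm:graphProdOpposableLogic} with $G$ opposable does not apply---one would need $H$ opposable instead.) Collecting the six verifications proves the corollary; the handling of the ``diagonal'' disjunct in the homomorphic case is the one place where the otherwise mechanical checking needs an extra word.
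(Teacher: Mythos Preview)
Your proof is correct and follows exactly the paper's approach---verifying the hypothesis of \cref{thm:graphProdOpposableLogic} for each product---only with the details spelled out where the paper simply writes ``by checking the definitions.'' Your care with the homomorphic product is well placed: the diagonal $g_1=g_2$, $h_1=h_2$ does make $\varphi$ true, so the implication in \cref{thm:graphProdOpposableLogic} literally fails there, and one needs precisely the observation you make (that the proof only invokes the condition with $g_2=f(g_1)\neq g_1$, equivalently that loops are excluded in simple graphs) to close a wrinkle the paper's terse proof passes over.
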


\begin{proof}
    The proof is by checking the definitions of each of the graph products
    against the condition for $\varphi$ in order to apply Theorem
    \ref{thm:graphProdOpposableLogic}.
\end{proof}

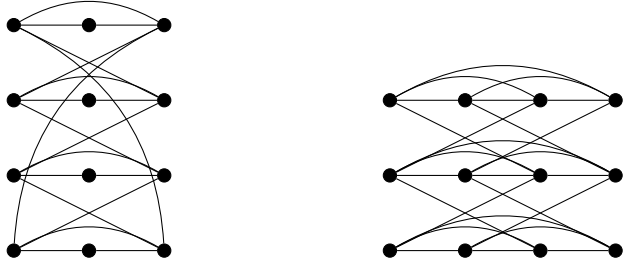
\begin{figure}
    \centering
    \begin{tikzpicture}
        \tikzstyle{vertex}=[circle, draw=black, fill=black, minimum
        size=5pt,inner sep=0pt]
        \foreach \x in {1,...,3}
        {
            \foreach \y in {1,...,4} 
            {
               \node[vertex]  (\x\y) at (\x,\y) {};
            }
        }
        \foreach \y in {1,...,4}
        {
            \draw (1\y) -- (2\y) -- (3\y);
            \path (3\y) edge [bend right] (1\y);
        }
        \foreach \y in {1,...,3}
        {
            \pgfmathtruncatemacro{\yo}{\y + 1}
            \draw (1\y) -- (3\yo);
            \draw (1\yo) -- (3\y);
        }
        \path (11) edge [bend left] (34);
        \path (14) edge [bend left] (31);

        \foreach \x in {1,...,4}
        {
            \foreach \y in {1,...,3} 
            {
               \node[vertex,xshift=5cm]  (\x\y) at (\x,\y) {};
            }
        }
        \foreach \y in {1,...,3}
        {
            \draw (1\y) -- (2\y) -- (3\y) -- (4\y);
            \path (4\y) edge [bend right] (1\y);
            \path (4\y) edge [bend right] (2\y);
            \path (3\y) edge [bend right] (1\y);
        }
        \foreach \y in {1,2}
        {
            \pgfmathtruncatemacro{\yo}{\y+1}
            \draw (1\y) -- (3\yo);
            \draw (3\y) -- (1\yo);
            \draw (2\y) -- (4\yo);
            \draw (4\y) -- (2\yo);
        }
    \end{tikzpicture}
    \caption{Opposable $C_4\ltimes P_3$ and non-opposable $P_3\ltimes C_4 \cong
    C_4\rtimes P_3$.}
    \label{fig:graph_product_opposability}
\end{figure}

To see that the conditions in Theorem \ref{thm:graphProdOpposableLogic}  are
necessary, we examine the graphs of \Cref{fig:graph_product_opposability}. Note
that $C_4\ltimes P_3$ meets the conditions of the theorem, and is thus
opposable. However, $P_3\ltimes C_4$ does not meet the conditions of the
theorem, and is not opposable, as the four vertices of degree five form a
clique and thus does not have an opposition. This example can be interpreted as
showing that some condition on $\varphi$ is necessary, as this graph could
instead be defined by swapping $G$ and $H$ (and the graph product accordingly)
to obtain the graph product $C_4\rtimes P_3$ which fails the $\varphi$
condition, or by interpreting it directly and noting  that it is necessary that
$G$ is opposable.

\section{Non-opposable Second-Player Win Graphs}
\label{sec: non-opposable 2nd player win graphs}

Kakihara \cite{kakihara:snort} conjectured that the second player wins
\textsc{snort} on a graph if and only if that graph is opposable. We show that
this conjecture is false by giving a number of counterexamples and
constructions. First, we show the example with the least number of vertices, as
determined by a simple computer search.

Consider the graph $P_3 \cup C_3$ illustrated in \cref{fig: P3 U C3}. We begin
by showing that $P_3 \cup C_3$ is not opposable. Since there are only two
pendants in $P_3 \cup C_3$, every automorphism of $P_3 \cup C_3$ either maps
the two pendants to themselves or to each other. Since automorphisms preserve
adjacency, every automorphism of $P_3 \cup C_3$ has the vertex of degree two in
the $P_3$ map to itself. Thus an opposition of $P_3 \cup C_3$ cannot exist
since an automorphism of $P_3 \cup C_3$ would be forced to have a fixed point.

Next we show that $P_3 \cup C_3$ is second-player win. Left has two options for
her first move. Either Left can play on the $P_3$ or the $C_3$. Suppose Left
plays on the $C_3$ for her first move. This reserves two vertices for Left that
Right cannot play on. In response, Right can play on the vertex of degree two
in the $P_3$. This reserves two vertices for Right that Left cannot play on.
Therefore both players have exactly two moves remaining with Left going first.
Thus Right wins. If instead Left played her first move on the $P_3$, she can
either play on a leaf or on the vertex of degree two. Regardless of which
vertex she plays on first, Right can play on any vertex in the $C_3$ which
preserves two vertices for himself that Left cannot play on. Therefore both
players have two moves remaining with Left going first and so Right wins.

\begin{figure}
    \centering
    \begin{tikzpicture}
        \tikzstyle{vertex}=[circle, draw=black, fill=black, minimum
        size=5pt,inner sep=0pt]

        \node[vertex] at (0,0) (v1) {};
        \node[vertex] at (0,1) (v2) {};
        \node[vertex] at (0,2) (v3) {};

        \node[vertex] at (3,0) (v4) {};
        \node[vertex] at (2,1) (v5) {};
        \node[vertex] at (3,2) (v6) {};

        \draw (v1)--(v2)--(v3);
        \draw (v4)--(v5)--(v6)--(v4);
    \end{tikzpicture}
    \caption{The smallest non-opposable graph that is second-player win.}
    \label{fig: P3 U C3}
\end{figure}
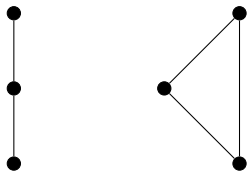

Next, we consider connectedness as a natural constraint on our graphs.
\cref{fig: smallest non-opposable 2nd player win connected graphs} illustrates
the five connected graphs on seven vertices that are non-opposable and
second-player win. By a computer search, among all connected, second-player
win, non-opposable graphs, the five graphs in \cref{fig: smallest non-opposable
2nd player win connected graphs} are all of the graphs with the fewest number
of vertices. Since all of the graphs in \cref{fig: smallest connected 2nd
player win graph (vertex and edge-wise)} have an odd number of vertices, they
are not opposable. In the following example, we show that one of the graphs is
second-player win. A similar proof can be done for the other four graphs.

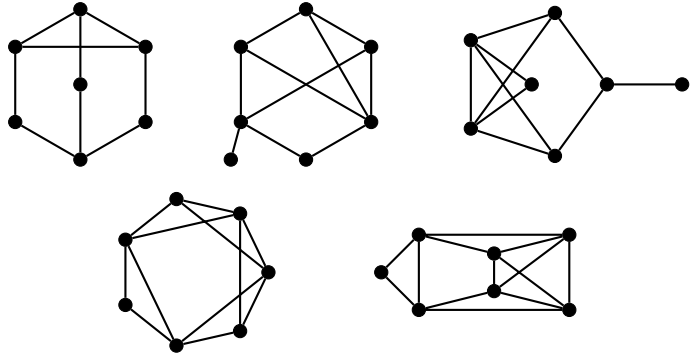
\begin{figure}
    \centering
    \begin{tikzpicture}
        \tikzstyle{vertex}=[circle, draw=black, fill=black, minimum
        size=5pt,inner sep=0pt]

        \foreach \x in {1,...,6} {
            \node[vertex] at ({90 + \x * 360/6}:1cm) (a\x) {};
        }
        \node[vertex] at (0,0) (a7) {};
        \draw[thick] (a1)--(a2)--(a3)--(a4)--(a5)--(a6)--(a1);
        \draw[thick] (a6)--(a7)--(a3);
        \draw[thick] (a5)--(a1);

        \foreach \x in {1,...,6} {
            \node[vertex, xshift=3cm] at ({90 + \x * 360/6}:1cm) (b\x) {};
        }
        \node[vertex, xshift=3cm] at (-1, -1) (b7) {};
        \draw[thick] (b2)--(b1)--(b6)--(b5)--(b4)--(b3)--(b2)--(b7);
        \draw[thick] (b2)--(b5);
        \draw[thick] (b1)--(b4)--(b6);

        \foreach \x in {1,...,5} {
            \node[vertex, xshift=6cm] at ({\x * 360/5}:1cm) (c\x) {};
        }
        \node[vertex, xshift=6cm] at (0,0) (c6) {};
        \node[vertex, xshift=6cm] at (2,0) (c7) {};
        \draw[thick] (c5)--(c4)--(c3)--(c2)--(c1)--(c5)--(c7);
        \draw[thick] (c2)--(c6)--(c3);
        \draw[thick] (c1)--(c3);
        \draw[thick] (c2)--(c4);

        \foreach \x in {1,...,7} {
            \node[vertex, xshift=1.5cm, yshift=-2.5cm] at ({\x * 360/7}:1cm)
            (d\x) {};
        }
        \draw[thick]
        (d6)--(d7)--(d1)--(d2)--(d3)--(d4)--(d5)--(d6)--(d1)--(d3)--(d5)--(d7)--(d2);

        \node[vertex, xshift=4.5cm, yshift=-3cm] at (0,0) (e1) {};
        \node[vertex, xshift=4.5cm, yshift=-3cm] at (0,1) (e2) {};
        \node[vertex, xshift=4.5cm, yshift=-3cm] at (2,1) (e3) {};
        \node[vertex, xshift=4.5cm, yshift=-3cm] at (2,0) (e4) {};
        \node[vertex, xshift=4.5cm, yshift=-3cm] at (1,0.25) (e5) {};
        \node[vertex, xshift=4.5cm, yshift=-3cm] at (1,0.75) (e6) {};
        \node[vertex, xshift=4.5cm, yshift=-3cm] at (-0.5,0.5) (e7) {};
        \draw[thick]
        (e1)--(e2)--(e3)--(e4)--(e1)--(e7)--(e2)--(e6)--(e3)--(e5)--(e6)--(e4)--(e5)--(e1);
    \end{tikzpicture}
    \caption{The five non-opposable, connected graphs of order seven that are
    second-player win.}
    \label{fig: smallest non-opposable 2nd player win connected graphs}
\end{figure}

Let $G$ be the graph in Figure \ref{fig: smallest connected 2nd player win
graph (vertex and edge-wise)}. Up to the symmetry of the graph, Left has five
options for her first move: $v_1$, $v_2$, $v_3$, $v_4$ and $v_7$.

Suppose on her first turn, Left colours $v_1$. In response, Right can colour
$v_4$. If Left colours $v_2$ on her second turn, then Right can colour $v_5$ to
win the game. If instead Left colours $v_6$ on her second turn, then Right can
colour $v_3$ to win the game. The analogous strategy works if Left begins with
$v_4$.

Suppose on her first turn, Left colours $v_2$. In response, Right can colour
$v_4$. If Left colours $v_1$ on her second turn, then Right can colour $v_5$ to
win the game. If instead Left colours $v_6$ on her second turn, then Right can
colour $v_7$ to win the game. Again, the analogous strategy works if Left
begins with $v_3$.

Suppose on her first turn, Left colours $v_7$. In response, Right can colour
$v_2$. If Left colours $v_4$ on her second turn, then Right can colour $v_6$ to
win the game. If instead Left colours $v_5$ on her second turn, then Right can
colour $v_3$ to win the game.

Therefore, regardless of which vertex Left colours on her first turn, Right can
still win the game. So, this graph is second-player win.

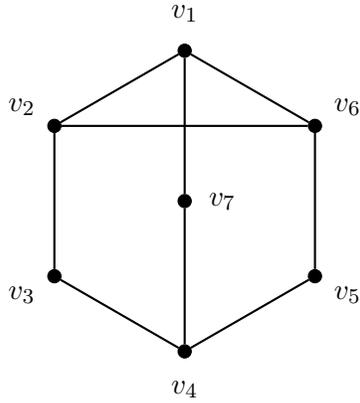
\begin{figure}
    \centering
    \begin{tikzpicture}
        \tikzstyle{vertex}=[circle, draw=black, fill=black, minimum
        size=5pt,inner sep=0pt]

        \foreach \x in {1,...,6} {
            \node[vertex] at ({90 + \x * 360/6}:2cm) (a\x) {};
        }
        \node[vertex] at (0,0) (a7) {};
        \draw[thick] (a1)--(a2)--(a3)--(a4)--(a5)--(a6)--(a1);
        \draw[thick] (a6)--(a7)--(a3);
        \draw[thick] (a5)--(a1);

        \foreach \x in {1,...,6} {
            \node at ({30 + \x * 360/6}:2.5cm) {$v_{\x}$};
        }
        \node at (0.5,0) {$v_7$};
    \end{tikzpicture}
    \caption{The graph with the fewest edges among all second-player win,
    connected graphs with seven vertices.}
    \label{fig: smallest connected 2nd player win graph (vertex and edge-wise)}
\end{figure}

\begin{figure}
    \centering
    \begin{tikzpicture}[scale=0.5]
        \tikzstyle{vertex}=[circle, draw=black, fill=black, minimum
        size=5pt,inner sep=0pt]

        \node[vertex] at (0,0) (v1) {};
        \node[vertex] at (1,0) (v2) {};
        \node[vertex] at (2,0) (v3) {};
        \node[vertex] at (3,0) (v4) {};
        \node[vertex] at (4,0) (v5) {};
        \node[vertex] at (5,0) (v6) {};
        \node[vertex] at (6,0) (v7) {};
        \node[vertex] at (2,1) (v8) {};
        \node[vertex] at (2,2) (v9) {};
        \node[vertex] at (2,-1) (v10) {};
        \node[vertex] at (2,-2) (v11) {};
        \node[vertex] at (4,1) (v12) {};

        \draw[thick] (v1)--(v2)--(v3)--(v4)--(v5)--(v6)--(v7);
        \draw[thick] (v9)--(v8)--(v3)--(v10)--(v11);
        \draw[thick] (v5)--(v12);

        \node[vertex, xshift=6cm] at (0,0) (u1) {};
        \node[vertex, xshift=6cm] at (1,0) (u2) {};
        \node[vertex, xshift=6cm] at (2,0) (u3) {};
        \node[vertex, xshift=6cm] at (3,0) (u4) {};
        \node[vertex, xshift=6cm] at (4,0) (u5) {};
        \node[vertex, xshift=6cm] at (5,0) (u6) {};
        \node[vertex, xshift=6cm] at (6,0) (u7) {};
        \node[vertex, xshift=6cm] at (3,3) (u8) {};
        \node[vertex, xshift=6cm] at (3,2) (u9) {};
        \node[vertex, xshift=6cm] at (3,1) (u10) {};
        \node[vertex, xshift=6cm] at (3,-1) (u11) {};
        \node[vertex, xshift=6cm] at (3,-2) (u12) {};
        \node[vertex, xshift=6cm] at (2,-3) (u13) {};
        \node[vertex, xshift=6cm] at (4,-3) (u14) {};
        \node[vertex, xshift=6cm] at (4,1) (u15) {};

        \draw[thick] (u1)--(u2)--(u3)--(u4)--(u5)--(u6)--(u7);
        \draw[thick] (u8)--(u9)--(u10)--(u4)--(u11)--(u12)--(u13);
        \draw[thick] (u14)--(u12);
        \draw[thick] (u15)--(u5);
    \end{tikzpicture}
    \caption{The unique smallest tree that is second-player win with an even
    number of vertices (left) and an odd number of vertices (right).}
    \label{fig: 2nd player winning 12 vertex tree}
\end{figure}
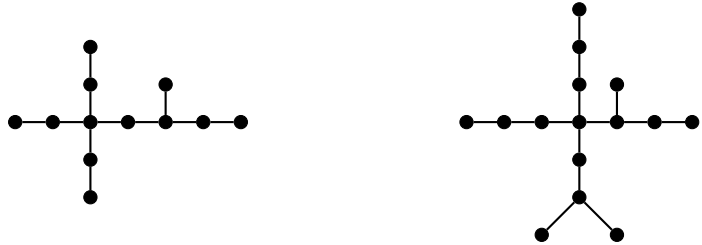

As a further natural constraint on our graphs, Figure \ref{fig: 2nd player
winning 12 vertex tree} illustrates two trees. As determined by a computer
search, out of all second-player win, non-opposable trees that have an even
(respectively odd) number of vertices, the tree with 12 (respectively 15)
vertices in Figure \ref{fig: 2nd player winning 12 vertex tree} is the smallest
in terms of number of vertices.

Next, we give examples of infinite families of graphs that are not opposable
and the second player wins. 

We construct an infinite family of graphs $\{G_{2n+1}\}_{n=2}^\infty$ where the
second player wins \textsc{snort} on each graph but none of the graphs are
opposable. For a fixed $n\geq 2$, begin with a $K_{2n+1}$. Take any two
vertices $u$ and $v$ of the $K_{2n+1}$ and subdivide the edge $uv$ by adding a
vertex $x$. Attach $2n-2$ pendants to $x$ to complete the construction of
$G_{2n+1}$. Figure \ref{fig: Will's infinite construction} illustrates $G_5$,
the graph in the family $\{G_{2n+1}\}_{n=2}^\infty$ that has the fewest number
of vertices.

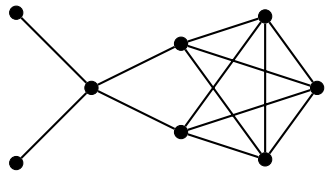
\begin{figure}
    \centering
    \begin{tikzpicture}
        \tikzstyle{vertex}=[circle, draw=black, fill=black, minimum
        size=5pt,inner sep=0pt]

        \foreach \x in {1,...,5} {
            \node[vertex] at ({\x * 360/5}:1cm) (a\x) {};
        }
        \node[vertex] at (-2,0) (x) {};
        \node[vertex] at (-3,1) (l1) {};
        \node[vertex] at (-3,-1) (l2) {};

        \draw[thick] (a1)--(a3)--(a5)--(a2)--(a4)--(a1);
        \draw[thick] (a2)--(a1)--(a5)--(a4)--(a3);
        \draw[thick] (l1)--(x)--(a2);
        \draw[thick] (l2)--(x);
        \draw[thick] (a3)--(x);
    \end{tikzpicture}
    \caption{A graph $G_5$ on eight vertices that is non-opposable and
    second-player win.}
    \label{fig: Will's infinite construction}
\end{figure}

To see that every graph in $\{G_{2n+1}\}_{n=2}^\infty$ is non-opposable, note
that for any $n\geq 2$, $G_{2n+1}$ contains exactly one vertex $x$ that is
adjacent to a pendant. Therefore every automorphism of $G_{2n+1}$ maps $x$ to
itself and so $G_{2n+1}$ is not opposable. Next, we show that the second player
always wins \textsc{snort} on any graph in $\{G_{2n+1}\}_{n=2}^\infty$.

Fix $n\geq 2$, with $u$, $v$ and $x$ as defined in the construction. Let $S$ be
the set of vertices in the $K_{2n+1}$ that are not $u$, $v$, or $x$. There are
four cases for Left's first move.

Suppose on her first move, Left colours a vertex in $S$. Right can respond by
colouring $x$. This results in the $2n-2$ pendants being accessible to Right
but not Left and $2n-2$ vertices in $S$ accessible to Left but not Right. Since
both players have $2n-2$ moves remaining with Left going first, Right wins.

Suppose on her first move, Left colours $u$ or $v$. In response, Right can
colour $v$ or $u$. Afterwards, the only vertices available to both players are
the $2n-2$ pendants. Since there are an even number of moves left in the game
with Left going first, Right wins.

Suppose on her first move, Left colours $x$. Right can respond by colouring any
of the vertices in $S$. This results in a similar position to the first case,
with each player having $2n-2$ moves remaining with Left going first. So Right
wins.

For the final case, suppose on her first move that Left colours a pendant.
Right can follow the same strategy as when Left began with colouring $x$, by
colouring any of the vertices in $S$. From this position, Left has at most
$2n-2$ moves remaining and Right has at least $2n-2$ moves remaining and Left
moves first. Therefore Right wins.

Next, we build a tool, the \emph{firework} of a graph, that will enable us to
construct infinitely many second-player win graphs from a given second-player
win graph. In particular, we use it to construct an infinite family of trees
that are second-player win.

\begin{definition}
    If $G$ is a graph that is second-player win, then we call a pair of
    vertices $(u,v)\in V(G)$ a \emph{response pair} if the game resulting from
    the first player playing on $u$, and the second player responding on $v$,
    is second-player win.
\end{definition}

Note that, if $(u,v)$ is a response pair, then so too is $(v,u)$. This is
because the game of \textsc{snort} on an (untinted) graph must be a symmetric
form; i.e.\ it is isomorphic to its conjugate ($G\cong\overline{G}$).

We note that response pairs may provide a way to generalize Kakihara's
conjecture that graphs are second-player win if and only if they have an
opposition. In essence, if the second player has a strategy which only looks at
the most recent move and not the full history, does this suffice to make the
graph opposable? 

\begin{problem}
    If there is a set of response pairs (second-player limited history winning
    strategies) $R=\{(u_i,v_i)\}$ such that all $u\in V(G)$ have some response
    pair $(u,v_i)\in R$, is it necessary and sufficient that $G$ is opposable?
    Are response pairs the correct definition to capture that the second player
    only looks at the previous move? 
\end{problem}

Observing the opposable graph $4K_1$ may enable a set of response pairs in a
$4$-cycle, we note that an opposition may not be immediately extracted, yet the
graph remains opposable.

\begin{definition}
    \label{def:firework}
    If $G$ is a graph that is second-player win, and $(u,v)$ is a response pair
    such that $\dist(u,v)\geq3$, then we construct the $n$\emph{-firework} of
    $G$ corresponding to the pair $(u,v)$ by adding $n$ leaves to $u$ and $n$
    leaves to $v$.
\end{definition}

Using response pairs, we may also construct an infinite family of trees where
the second player wins. In \cref{def:firework}, one might like to refer to the
vertices $u$ and $v$ as the \emph{shells}, and the leaves attached to $u$ and
$v$ as the \emph{confetti} (a single leaf in particular would be a
\emph{confetto}).

\begin{lemma}
    \label{lem: bad-moves}
    If $G$ is a graph such that the neighbours of $u$ are a subset of the
    neighbours of $v$, then, when playing \textsc{snort} on $G$, it is always
    at least as good to play on $v$ as it is to play on $u$.
\end{lemma}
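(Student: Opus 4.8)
The plan is a strategy-stealing argument: a player who is about to colour $u$ is shown she can do at least as well by colouring $v$ instead. First a reading of the hypothesis. I take ``the neighbours of $u$ are a subset of the neighbours of $v$'' to mean the containment of \emph{closed} neighbourhoods $N[u]\subseteq N[v]$; with open neighbourhoods the statement is false (for instance, take $u$ isolated and $v$ with a single neighbour $b$, in the position where $b$ is blue: then colouring $u$ leaves Right with no move, while colouring $v$ lets Right take $u$ and win). Note that $N[u]\subseteq N[v]$ with $u\neq v$ forces $uv\in E(G)$, and that $N(u)\setminus\{v\}\subseteq N(v)$. Since \textsc{snort} is symmetric under swapping the two players (equivalently, the two colours), it suffices to argue for Left, so I will show: for every position $P$ in which $u$ and $v$ are both legal moves for Left, if colouring $u$ leaves a position won by Left then so does colouring $v$. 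Writing $P_u,P_v$ for the two resulting positions (Right to move in each), the target is ``$P_u$ a Left win $\Rightarrow$ $P_v$ a Left win'', which is exactly ``$v$ is never a worse move than $u$''.

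To prove the implication I would run two games in parallel: the \emph{real} game $R$, started from $P_v$, and a \emph{shadow} game $S$, started from $P_u$, in which Left follows a fixed strategy $\sigma$ witnessing that $P_u$ is a Left win. The bookkeeping map is the transposition $\mu$ of $V(G)$ that swaps $u$ and $v$ and fixes everything else. The invariant to maintain after each round is: it is Right's turn in both games; the coloured vertices of $R$ are the $\mu$-images of the coloured vertices of $S$, with matching colours; and, among vertices other than $u$ and $v$, every blue-tinted vertex of $S$ is blue-tinted in $R$ while the red-tinted vertices are the same in both. The crucial structural point, and the only place $uv\in E(G)$ is needed, is that in $R$ the vertex $v$ is coloured blue and hence $u$ is blue-tinted, so Right can never colour either $u$ or $v$ in $R$; thus every Right move in $R$ is on some $w\notin\{u,v\}$, and by the tint part of the invariant together with $N(u)\setminus\{v\}\subseteq N(v)$ the same vertex $w$ is a legal Right move in $S$. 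Left then consults $\sigma$, which prescribes a legal move $z$ for her in $S$, and she plays $\mu(z)$ in $R$. When $z\notin\{u,v\}$ this is the same vertex and legality transfers at once; the one case requiring care is $z=v$ (legal for Left in $S$ precisely because $u$ is coloured there and $uv\in E(G)$, so $v$ is merely blue-tinted), in which case Left plays $u$ in $R$, and one checks $u$ is then uncoloured and not red-tinted in $R$: it is uncoloured because Right can never reach it and Left has not yet played it, and not red-tinted because every neighbour of $v$ in $R$ is blue-tinted (so never red) and $u$'s neighbours other than $v$ lie in $N(v)$. A short check then confirms the invariant is restored, using that $v$ is already blue in $R$, so $N(v)$ is permanently blue-tinted in $R$ and absorbs the ``missing'' tint when Left plays $u$ in $R$ in place of $v$ in $S$.

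Since $\sigma$ forces Right to run out of legal moves in $S$ after finitely many rounds, and a vertex legal for Right in $R$ is always legal for Right in $S$, Right also runs out of legal moves in $R$; hence $P_v$ is a Left win, as required. The analogous statement for Right follows by conjugating colours, so the lemma holds for whichever player is to move. (If one wants the stronger, game-value statement $P_v\geq P_u$, the same transposition mirror can be applied in the sum $P_v+\overline{P_u}$, but for determining outcomes the implication above is what is needed.)

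I expect the main obstacle to be precisely the asymmetry created by $N[u]\subsetneq N[v]$: the naive mirror between the ``$u$-game'' and the ``$v$-game'' is not an isomorphism, and one must check that wherever it fails, the real game $R$ is only \emph{more} favourable to Left (Right has strictly fewer options; the relevant vertices are blue-tinted rather than free). Concretely, the two delicate verifications are that Right can never touch $u$ or $v$ in $R$ (so Right's moves mirror trivially, via the identity) and that a shadow move of $\sigma$ onto $v$ always has a legal counterpart onto $u$ in $R$; the rest is routine propagation of the invariant.
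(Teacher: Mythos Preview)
Your reading of the hypothesis as $N[u]\subseteq N[v]$ is correct, and your counterexample with an isolated $u$ shows the open-neighbourhood version is genuinely false; this is a point the paper leaves implicit. In the paper's one application of this lemma (to confetti in the firework construction), the relevant pair is a leaf $c$ attached to a shell vertex, and there one has $N[c]\subseteq N[\text{shell}]$ but \emph{not} $N(c)\subseteq N(\text{shell})$, so your interpretation is the one that makes the application go through.

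Your approach differs substantially from the paper's. The paper gives a two-sentence monotonicity argument: colouring $v$ tints blue a superset of what colouring $u$ tints, and ``more blue tints can only help Left''. This is morally right but glosses over the fact that the two resulting positions also differ in \emph{which} of $u,v$ remains uncoloured; the claim that this residual difference still favours the $v$-position is exactly where $u\sim v$ (equivalently, the closed-neighbourhood hypothesis) is needed, and the paper does not make that step explicit. Your strategy-stealing argument via the transposition $\mu=(u\;v)$ fills this in: you track a shadow game from $P_u$ and a real game from $P_v$, and the key observation that $v$ is blue in $R$ from the outset (hence $u$ is blue-tinted and neither $u$ nor $v$ is ever available to Right in $R$) is precisely what fails without $u\sim v$. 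One small wording issue: when you justify that $u$ is not red-tinted in $R$ at the moment $\sigma$ prescribes $v$, the phrase ``every neighbour of $v$ in $R$ is blue-tinted (so never red)'' is not quite the right reason, since being blue-tinted does not by itself preclude having been coloured red earlier. The correct chain is: $v$ was legal for Left in the starting position $P$, so no neighbour of $v$ was red there; and since $v$ is blue in $R$ from the first move, Right can never subsequently colour a neighbour of $v$; hence neighbours of $v$ are never red in $R$, and $N(u)\setminus\{v\}\subseteq N(v)$ finishes it. With that adjustment your invariant propagates cleanly and the argument is complete. What you gain over the paper is rigour (and the explicit identification of the hypothesis needed); what the paper's version gains is brevity, at the cost of leaving the crucial use of $u\sim v$ to the reader.
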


\begin{proof}
    Suppose, without loss of generality, that at some moment in the game Left
    has the opportunity to play on either $u$ or $v$. We must show that, if she
    has a winning strategy playing on $u$, then she also has a winning strategy
    playing on $v$.

    Notice that playing on $u$ removes $u$ and tints its open neighbourhood
    blue, and similarly for playing on $v$. However, every neighbour of $u$ is
    also a neighbour of $v$, and so the move on $v$ is at least as advantageous
    for Left as the move on $u$; the only possible difference between the graph
    obtained by moving on $v$ and the graph obtained by moving on $u$ is that
    the former may have more vertices tinted blue, which can only help Left.
\end{proof}

\begin{theorem}
    \label{thm: firework}
    If the second player wins \textsc{snort} on a graph $G$ with a response
    pair $(u,v)$ of distance at least 3, then the second player wins
    \textsc{snort} on the $n$-firework of $G$ for all $n\geq|V(G)|$.
\end{theorem}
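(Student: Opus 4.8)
The plan is to give the second player, Right, an explicit winning strategy on the $n$-firework $F$ of $G$, arguing by cases on Left's first move. Throughout I will use two standard facts about short partizan games (see Siegel's textbook): a \textsc{snort} position is second-player win exactly when its game value is $0$; and a single leaf whose unique neighbour has already been coloured blue (respectively red) is an isolated component which only Left (respectively Right) can play, of value $+1$ (respectively $-1$), and playing it is always legal and changes nothing else. Call the leaves attached to $u$ the \emph{$u$-confetti} and those attached to $v$ the \emph{$v$-confetti}. The clean case is when Left opens by colouring $u$ blue. Since $\dist(u,v)\ge3$, the vertex $v$ is untinted, so Right answers on $v$; and, again because $\dist(u,v)\ge3$, the closed neighbourhoods of $u$ and $v$ are now disjoint, so the position is the disjoint sum of: the position on $G$ obtained by Left playing $u$ and then Right playing $v$, which has value $0$ because $(u,v)$ is a response pair; the $n$ $u$-confetti, now Left-only of value $+1$ each; and the $n$ $v$-confetti, Right-only of value $-1$ each. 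The total value is $0+n\cdot 1+n\cdot(-1)=0$ with Left to move, so Right wins. The case where Left opens on $v$ is identical, using the response pair $(v,u)$.

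When Left's opening move is instead a confetto or a core vertex of $V(G)\setminus\{u,v\}$, Right plays to steer the game back into (a shifted form of) that decomposition. The idea is that Right mirrors play at the shells ($u$-confetto $\mapsto$ $v$-confetto, and $u\mapsto v$), and answers play in the core by following the $\mathcal P$-strategy of the $(u,v)$-response position on $G$, while \emph{pretending that $u$ has already been coloured blue} --- a pretence that can only help Right, since the extra blue tints it introduces never forbid a move by Left. Any move of Left's that has no counterpart under this scheme --- her eventual colouring of $u$ once it has merely been tinted, or a surplus she accumulates at one shell --- Right answers by burning a spare leaf at the opposite shell. One then has to verify: (i) every move Right makes is legal in $F$, by a containment argument on the blue- and red-tinted vertex sets; (ii) the leaf reservoirs never run dry, which is exactly where the hypothesis $n\ge|V(G)|$ is used, since Left can force only about $|V(G)|$ off-script moves before the position settles; and (iii) once $u$ and $v$ have received their colours, the residual position is again a value-$0$ core together with a balanced, or Right-favourable (value $\le 0$), remainder of leaves, with Right to move, so that Right wins.

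The principal obstacle is making (i)--(iii) precise, and in particular coping with a persistent off-by-one: because Left's opening lands away from $u$, the simulated game on $G$ runs one Left move behind the real game, so Right's intended $\mathcal P$-strategy reply can occasionally be pre-empted --- it turns out to be adjacent to the extra blue vertex Left has played --- whereupon Right must divert to a spare leaf and re-synchronise, and one must check that the parity still leaves Left, not Right, unable to move at the end. I expect this to require a further split according to whether and when Left colours $u$ and $v$, whether she tints $v$ blue from the core before Right can claim $v$, and so on, in the same reserved-vertex accounting style used earlier in this section for the order-seven graphs and the family $\{G_{2n+1}\}$. Here \cref{lem: bad-moves} helps keep the case analysis finite: a confetto is never a strictly better move than a neighbour of its support vertex, so a player may be assumed never to play a confetto while such a neighbour is still available.
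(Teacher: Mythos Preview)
Your opening case (Left plays $u$ or $v$) is handled correctly and matches the paper. The trouble is entirely in the ``otherwise'' case, where you embark on a long simulation of the $(u,v)$-response strategy inside the core, with leaf-burning to absorb desynchronisations. You yourself flag that this requires checking legality of each simulated reply against the extra blue vertex $w_1$, tracking parity through re-synchronisations, and splitting further on whether and when $u$ or $v$ gets tinted from the core. That programme might be completable, but as written it is not a proof: the crucial steps (i)--(iii) are stated as things one would still need to verify, and the ``re-synchronise'' mechanism is never specified.

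The paper sidesteps all of this with a much shorter argument: Right does not simulate the core strategy at all in this case. After Left's first move (not $u$, not $v$), Right plays $v$; by \cref{lem: bad-moves} Left's second move may be assumed to lie in $V(G)$. If it is $u$, we are back in your clean case (the order of Left's moves is irrelevant). If it is some other core vertex, then Right immediately plays $u$ --- or, if $u$ is already blue-tinted, a $u$-confetto, which kills $u$. Either way, after only two moves per side the position is a disjunctive sum: the $v$-confetti contribute $-n$; the $u$-confetti contribute either another negative or a sum of copies of $*$; and what remains on $G$ is a subposition $G'$ with at least one vertex removed, hence $G'\le |V(G)|-1\le n-1$. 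Thus the total value is at most $(n-1)-n+*<0$, and Right (with Left to move) wins. The point you are missing is that the $n$ red-tinted $v$-confetti already outweigh anything that can happen in the core, so Right need only secure $v$ and neutralise $u$; there is no need to play well inside $G$ thereafter.
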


\begin{proof}
    Consider playing the $n$-firework $F$ of $G$ constructed from the response
    pair $(u,v)$. Since this is a symmetric form, we need only show that Right
    wins playing second.

    If Left plays on $u$, then Right can win by playing on $v$ since $(u,v)$ is
    a response pair. Similarly, if Left plays on $v$, then Right can win by
    playing on $u$. So suppose now that Left plays on neither $u$ nor $v$.
    Since $\dist_G(u,v)\geq3$, from the construction of a firework we have that
    $\dist_F(u,v)\geq3$. So, we may assume without loss of generality that
    Right may respond on $v$.

    By \cref{lem: bad-moves}, we may assume that Left responds either on $u$ or
    on some other vertex of $G$; in particular, Left does not play on a
    confetto. If Left plays on $u$, then Right can resume the second player
    winning strategy, since the order of Left's moves does not matter.
    Otherwise, Right can play either on $u$ or some confetto of $u$. In either
    case, what results is necessarily the disjunctive sum of a subposition $G'$
    of the original game $G$, a sum of copies of $*$ (from the confetti, if
    both players play adjacent to $u$), and the negative integer $-n$ (from the
    leaves of $v$).
    
    Since $|V(G)|\leq n$, the subposition $G'$ of $G$ has value $G'\leq n-1$
    (some vertices of $G$ have been removed). Thus, Right wins the resulting
    graph.
\end{proof}

In many cases, \cref{thm: firework} could be improved by allowing $n$ to in
fact be significantly smaller than $|V(G)|$, but we have no need for such
increased power here; we already have all we need to find our second-player win
trees.

\begin{corollary}
    There exists an infinite number of trees on which the second player wins
    \textsc{snort}.
\end{corollary}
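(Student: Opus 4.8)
The plan is to bootstrap from a single known second-player win tree using the firework construction of \cref{def:firework} and \cref{thm: firework}. First I would identify a concrete base tree $T_0$ that is second-player win: the excerpt has already exhibited one, namely the $12$-vertex tree on the left of \cref{fig: 2nd player winning 12 vertex tree}, so I would take $T_0$ to be that tree (or indeed any second-player win tree — even a bare edge $P_2$ works, since playing \textsc{snort} on $P_2$ is trivially second-player win). To apply \cref{thm: firework} I need a response pair $(u,v)$ in $T_0$ with $\dist(u,v)\geq 3$; since $T_0$ is a tree with many vertices far apart, and since in a second-player win game \emph{every} first move by Left must admit a winning response by Right (so in particular $u$ can be chosen to have some partner $v$ at distance $\geq 3$ — one can pick $u$ a leaf, note Right's winning reply $v$, and check $v$ is not within distance $2$; if necessary, start from a base tree engineered so this holds). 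Fix such a pair.

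Next I would iterate: define $T_{k+1}$ to be the $n_k$-firework of $T_k$ for the corresponding response pair, choosing $n_k \geq |V(T_k)|$ so that \cref{thm: firework} applies. The firework construction only attaches pendant leaves to $u$ and $v$, so it preserves acyclicity: adding leaves to a tree yields a tree. Hence each $T_k$ is again a tree, and by \cref{thm: firework} each $T_k$ is second-player win. Moreover $|V(T_{k+1})| = |V(T_k)| + 2n_k > |V(T_k)|$, so the $T_k$ are pairwise non-isomorphic (strictly increasing order), giving infinitely many distinct second-player win trees.

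The one point requiring care — the main obstacle — is the persistence of a usable response pair of distance $\geq 3$ from one $T_k$ to the next, so that the induction can continue. When we pass to the $n$-firework, the shells $u$ and $v$ acquire pendant neighbours, but their mutual distance is unchanged (the new leaves are pendants and create no shortcuts), so $\dist(u,v) \geq 3$ still holds in $T_{k+1}$. It remains to confirm that $(u,v)$ is still a response pair in $T_{k+1}$: but this is exactly the content of the proof of \cref{thm: firework} — when Left opens on $u$ in the firework, Right wins by replying on $v$ (and symmetrically), since the confetti are pendants and \cref{lem: bad-moves} lets Right mirror the original strategy. So the same pair $(u,v)$ works at every stage, and the induction goes through cleanly. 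Alternatively, and perhaps more transparently, one can simply invoke \cref{thm: firework} once with a base tree whose fireworks are themselves seen to be second-player win with an evident response pair, but the inductive phrasing above is the cleanest route to "infinitely many."

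\begin{proof}
    Fix any second-player win tree $T_0$ admitting a response pair $(u,v)$ with
    $\dist_{T_0}(u,v)\geq 3$; for instance, the $12$-vertex tree on the left of
    \cref{fig: 2nd player winning 12 vertex tree} is second-player win, and (as
    the analysis of that game shows) Left's opening on a suitable leaf $u$ has a
    winning reply $v$ for Right with $\dist(u,v)\geq 3$. Given $T_k$, let
    $n_k\geq|V(T_k)|$ and let $T_{k+1}$ be the $n_k$-firework of $T_k$
    corresponding to $(u,v)$. Since a firework is obtained from $T_k$ by
    attaching pendant leaves to $u$ and to $v$, the graph $T_{k+1}$ is again a
    tree, and attaching pendants does not change $\dist(u,v)$, so
    $\dist_{T_{k+1}}(u,v)\geq 3$. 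By \cref{thm: firework}, $T_{k+1}$ is
    second-player win, and the proof of that theorem shows that $(u,v)$ remains
    a response pair in $T_{k+1}$ (Right replies on $v$ to an opening on $u$, and
    symmetrically). Hence the construction iterates. Finally,
    $|V(T_{k+1})|=|V(T_k)|+2n_k>|V(T_k)|$, so the trees $T_0,T_1,T_2,\dots$ have
    strictly increasing order and are therefore pairwise non-isomorphic. This
    yields infinitely many trees on which the second player wins \textsc{snort}.
\end{proof}
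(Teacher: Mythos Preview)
Your approach is the paper's approach --- apply \cref{thm: firework} to a base tree possessing a distance-$\geq 3$ response pair, and note that fireworks of trees are trees --- but you bolt on machinery that is not needed. \cref{thm: firework} already gives a second-player win for the $n$-firework for \emph{every} $n\geq|V(G)|$, and distinct $n$ yield trees of distinct orders; so a single base tree already produces infinitely many examples, with no iteration and no need to check that $(u,v)$ persists as a response pair. (Your iteration is not incorrect, just redundant: the $n_k$-firework of the $n_{k-1}$-firework of $\cdots$ of $T_0$ at the same shells $(u,v)$ is simply the $(n_0+\cdots+n_k)$-firework of $T_0$, which the theorem handles directly.) The paper does exactly the one-shot version, exhibiting a specific tree (\cref{fig:response-pair-tree}) with the pair $(u,v)$ at distance $3$ labelled in the figure.

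Two further points. Your aside that $P_2$ could serve as a base tree is wrong: $P_2$ has no pair of vertices at distance $\geq 3$, so \cref{def:firework} does not even apply. And for your chosen base, the $12$-vertex tree of \cref{fig: 2nd player winning 12 vertex tree}, you assert a response pair at distance $\geq 3$ exists ``as the analysis of that game shows,'' but no analysis of that tree appears anywhere in the text; this is a genuine gap you would have to fill by hand. The paper avoids this by displaying its own tree with the pair indicated.
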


\begin{proof}
    Consider the tree in \cref{fig:response-pair-tree}. It is a straightforward
    calculation to show that it is second-player win and has a response pair of
    distance three. Thus, by \cref{thm: firework}, we have the result; notice
    that the firework construction only adds leaves, and so every firework of a
    tree is itself a tree.
\end{proof}

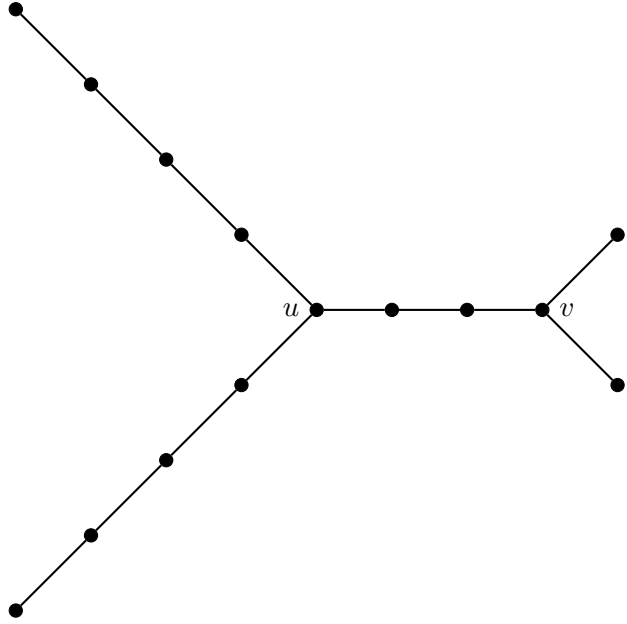
\begin{figure}
    \centering
    \begin{tikzpicture}
        \tikzstyle{vertex}=[circle, draw=black, fill=black, minimum
        size=5pt,inner sep=0pt]

        \node[vertex] at (-4,4) (v0) {};
        \node[vertex] at (-3,3) (v1) {};
        \node[vertex] at (-2,2) (v2) {};
        \node[vertex] at (-1,1) (v3) {};
        \node[vertex, label=left: $u$] at (0,0) (v4) {};
        \node[vertex] at (1,0) (v5) {};
        \node[vertex] at (2,0) (v6) {};
        \node[vertex, label=right: $v$] at (3,0) (v7) {};
        \node[vertex] at (4,1) (v8) {};
        \node[vertex] at (4,-1) (v9) {};
        \node[vertex] at (-1,-1) (v10) {};
        \node[vertex] at (-2,-2) (v11) {};
        \node[vertex] at (-3,-3) (v12) {};
        \node[vertex] at (-4,-4) (v13) {};

        \draw[thick] (v0)--(v1)--(v2)--(v3)--(v4)--(v5)--(v6)--(v7)--(v8);
        \draw[thick] (v7)--(v9);
        \draw[thick] (v4)--(v10)--(v11)--(v12)--(v13);
    \end{tikzpicture}
    \caption{A second-player win tree with a response pair $(u,v)$ of distance
    3.}
    \label{fig:response-pair-tree}
\end{figure}

We can use a similar trick to create untinted, second-player win graphs from
tinted, second-player win graphs. The trick is to add a new vertex and make it
adjacent to all the blue-tinted vertices, and add another vertex and make it
adjacent to all the red-tinted vertices. Then, as in the firework construction,
add many leaves to each of these two new vertices. If one adds a sufficiently
large number of leaves, then the resulting graph must be second-player win. 

\section{Almost Opposable Graphs}
\label{sec: almost opposable graphs}

In this section, we define a graph property analogous to opposable where the
player moving first is able to use a ``mirroring'' strategy to win. 

\begin{definition}
    \label{def: almost opposable}
    Let $G$ be a graph. We say that a set $S\subseteq V(G)$ is
    \emph{admissible} if $u\in S\subseteq N[u]$ for some $u\in V(G)$. If there
    exists an admissible set $S$ such that $G-S$ is opposable then we say that
    $G$ is \emph{almost opposable}.
\end{definition}

Definition \ref{def: almost opposable} is a generalization of one of the
automorphisms studied by Arroyo \cite{arroyo:dawsons}. In their thesis
\cite{arroyo:dawsons}, Arroyo considered automorphisms of order two on graphs
with an odd number of vertices such that these automorphisms had exactly one
fixed point and, besides the fixed point, the automorphism mapped vertices
outside of their closed neighbourhoods. Graphs with such automorphisms satisfy
our definition of almost opposable because deleting the vertex that is a fixed
point of the automorphism yields a graph that has an opposition. 

Now we show that the first player wins on almost opposable graphs. 

\begin{lemma}
    \label{lem: a.o. implies 1st player win}
    If $G$ is almost opposable, then the first player wins \textsc{snort} on
    $G$.
\end{lemma}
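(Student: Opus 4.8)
The plan is to combine the almost-opposable structure with the second-player strategy from \cref{lem: opposable implies second player wins snort}. Since $G$ is almost opposable, there is an admissible set $S$, witnessed by some vertex $u$ with $u \in S \subseteq N[u]$, such that $G - S$ admits an opposition $f$. The first player's opening move should be to colour $u$. I claim this is legal (the graph starts untinted) and that afterwards the first player can mirror the second player's moves via $f$, just as the \emph{second} player does in the opposable case.

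First I would check what the graph looks like after the first player plays $u$. Colouring $u$ removes $u$ and tints its open neighbourhood $N(u)$ with the first player's colour. Every vertex of $S \setminus \{u\}$ lies in $N(u)$, so all of $S$ is now either coloured or tinted for the first player; in particular, the second player can never play on any vertex of $S$. So from the second player's perspective the game lives entirely on $G - S$, a graph with opposition $f$. The key point I need to verify is that the induction in the proof of \cref{lem: opposable implies second player wins snort} still runs when the starting position is not untinted but merely has some vertices of $G-S$ tinted by the first player's colour. Re-examining that argument: the three contradictions derived there (that $f(w)$ is already coloured by the first/second player, or that $u$ is adjacent to an earlier second-player move) only used that $f$ is an order-two automorphism of the graph being played on, together with the history of play being a strategy-stealing response sequence — none of them used untintedness of the second player's side. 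The only genuinely new obligation is that when the second player plays some vertex $w \in V(G-S)$, the first player's response $f(w)$ is actually legal, i.e.\ not already tinted the opposing colour. Since the only opposing-colour tints on $G-S$ come from the initial move $u$, and $f$ is an automorphism of $G-S$, if $f(w)$ were adjacent to a first-player move then $w = f(f(w))$ would be adjacent to a first-player move too — but $w$ was just legally played by the second player, a contradiction.

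The main obstacle is precisely this bookkeeping about tints: one must be careful that the first player's mirroring responses are always available, and in particular that the first player never wants or needs to play inside $S$ again after the opening move (the tinting makes $S \setminus \{u\}$ irrelevant, and $f$ maps $G-S$ into itself). Once that is settled, the conclusion follows exactly as before: every second-player move is answered, so the second player is the one who eventually runs out of moves, hence the first player wins.

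I would also remark on the edge case where $S = V(G)$, so that $G - S$ is the empty graph (vacuously opposable): then after the first player plays $u$, the second player has no legal move at all, and the first player wins immediately. This is consistent with the general argument with an empty mirroring phase.
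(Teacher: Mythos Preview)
Your proof is correct and takes the same route as the paper: the first player opens on the witnessing vertex, which confines the second player to $G-S$, and then mirrors via the opposition $f$ exactly as in \cref{lem: opposable implies second player wins snort} with the roles exchanged. The paper's own proof is in fact terser than yours, simply deferring to that lemma's inductive argument without re-checking the details.

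One paragraph needs cleaning up, though. You write that ``the only opposing-colour tints on $G-S$ come from the initial move $u$'' and then argue about $f(w)$ being adjacent to a \emph{first-player} move. This is muddled: $u$ is a first-player move, so the tints it creates are first-player tints, and those never obstruct the first player from playing $f(w)$. The obstruction you actually need to rule out---$f(w)$ adjacent to a \emph{second-player} vertex---is precisely the third case already handled by the argument of \cref{lem: opposable implies second player wins snort}, since every second-player move and every first-player response after the opening lies in the induced subgraph $G-S$, on which $f$ is an automorphism. So there is no ``genuinely new obligation'' at all. Your claim that ``if $f(w)$ were adjacent to a first-player move then $w$ would be adjacent to a first-player move too'' is also not valid as stated (it fails for the move $u$ itself, since $u\notin V(G-S)$ and $f$ is only an automorphism of $G-S$), but fortunately it is unnecessary. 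Delete that paragraph and the proof stands.
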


\begin{proof}
    Let $v$ be a vertex in $G$ such that for some $v\in S\subseteq N[v]$, $G-S$
    is opposable. Fix $S$ to be any such subset of $N[v]$. Let $f$ be an
    opposition of $G-S$. The first player can win by implementing the following
    strategy. On her first move, the first player colours $v$. By doing this,
    every vertex that the second player colours must be in $G-N[v]$.
    Consequently, every vertex the second player colours is in the subgraph
    $G-S$. Since $G-S$ is opposable, whenever the second player colours $u$ in
    $G-S$, the first player can respond by colouring $f(u)$. By the same
    inductive argument as in the proof of Lemma \ref{lem: opposable implies
    second player wins snort}, the first player can continue this strategy
    until the second player runs out of moves. 
\end{proof}

The converse of Lemma \ref{lem: a.o. implies 1st player win} does not hold. By
considering brooms, we can obtain infinitely many trees where the first player
wins, but the graphs are not almost opposable.

A broom graph $B(n,m)$ is a path on $n$ vertices with $m$ leaves attached to
one end. In particular, the longest path in $B(n,m)$ has $n+1$ vertices. We
claim that brooms are not almost opposable when $n\geq5$ and $m\geq3$ and when
$n\geq8$ and $m=2$.

If $n\geq5$ and $m\geq3$, then the vertex $v$ adjacent to the $m$ leaves has
degree at least four. Any vertex deletions that do not include $v$ can remove
at most one vertex adjacent to $v$. So the resulting graph would have a single
vertex of degree at least three and therefore no opposition. Deleting an
admissible set that includes $v$ must leave a path on at least two vertices,
which is not opposable since any automorphism will have either a fixed point or
adjacent vertices mapped to each other at the centre of the path.

If $n=5,\,6$ or $7$ and $m=2$, then it is possible to remove one, two, or three
path vertices respectively to yield a $P_3\cup P_3$, which allows an
opposition. If $n\geq8$ and $m=2$, then the broom is not almost opposable since
the deletion of any admissible set will leave either a single vertex of degree
greater than two, the disjoint union of two paths of different lengths, a
single path, or a path plus two isolated vertices. Observing none of these are
opposable, we conclude $B(n,2)$ is not almost opposable for $n\geq 8$.

To show that the first player wins on infinitely many brooms, we can use the
\emph{temperature} of \textsc{snort} played on a path. For the definition of
the temperature of a combinatorial game we direct the reader to
\cite{huntemann.nowakowski.ea:bounding}. Suppose the first player makes their
initial move on the end of the path adjacent to the $m$ leaves. This leaves a
path on $n-1$ vertices to play on, with one end only available to the first
player. Since the temperature of paths in \textsc{snort} is at most $7.5$
\cite{huntemann.nowakowski.ea:bounding}, with the second player having the next
move they can reserve no more than $8$ vertices in optimal play. Therefore if
$m\geq9$ and $n\in \mathbb{Z}^+$, then the first player wins on $B(n,m)$.

Unlike opposability, graph products do not behave quite as well with almost
opposability. A stronger condition can instill better behaviour.

\begin{definition}
    Let $G$ be an almost opposable graph. If there exists an automorphism of
    order two $\alpha: V(G) \to V(G)$, an admissible set $S$, and an opposition
    $f$ of $G-S$ such that for all $x\in V(G-S)$, $\alpha(x) = f(x)$, then we
    say that $f$ is a \emph{compatible} opposition. If $G-S$ has a compatible
    opposition for some admissible set $S$ then we say that $G$ is
    \emph{compatibly} almost opposable. 
\end{definition}

\begin{theorem}
    \label{thm: a.o. graphs with automorphism strong product is a.o.}
    If $G$ and $H$ are compatibly almost opposable graphs then $G\boxtimes H$
    is compatibly almost opposable. 
\end{theorem}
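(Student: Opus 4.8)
The plan is to mirror the structure of the proof of \cref{thm:graphProdOpposableLogic}, but now carrying along the extra automorphism data that the notion of compatible almost opposability supplies. Write $S_G \subseteq N_G[s]$ for an admissible set in $G$ witnessed by a vertex $s$, with $\alpha_G$ an order-two automorphism of $G$ and $f_G$ a compatible opposition of $G - S_G$ with $\alpha_G|_{V(G-S_G)} = f_G$; similarly $S_H \subseteq N_H[t]$, $\alpha_H$, and $f_H$. The natural candidate for an admissible set in $G \boxtimes H$ is $S = \{(g,h) : g \in S_G \text{ or } h \in S_H\}$ — equivalently the vertices of $G \boxtimes H$ whose projection lands in $S_G$ on the $G$-coordinate or in $S_H$ on the $H$-coordinate — witnessed by the vertex $(s,t)$. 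First I would check $S$ is admissible: since $S_G \subseteq N_G[s]$ and $S_H \subseteq N_H[t]$, any $(g,h) \in S$ has $g \in N_G[s]$ or $h \in N_H[t]$, and by the definition of the strong product (where $(g,h)(g',h')$ is an edge iff the coordinates are ``$\leq$-related and not both equal''), this gives $(g,h) \in N_{G\boxtimes H}[(s,t)]$, so $(s,t) \in S \subseteq N_{G\boxtimes H}[(s,t)]$.

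Next I would identify $G \boxtimes H - S$ with $(G - S_G) \boxtimes (H - S_H)$, which is immediate from the definition of the strong product on induced subgraphs, and observe that $(G-S_G)$ and $(H-S_H)$ are both opposable (via $f_G$ and $f_H$). By \cref{cor: graph products preserve opposable} — or directly by \cref{thm:graphProdOpposableLogic}, since the strong product satisfies the required $\varphi$-condition — the map $\hat f(g,h) = (f_G(g), h)$ is an opposition of $(G-S_G)\boxtimes(H-S_H)$; but to get the \emph{compatible} version I instead want the ``diagonal'' opposition $F(g,h) = (f_G(g), f_H(h))$ on $(G-S_G)\boxtimes(H-S_H)$, and I would verify this is an opposition: it is an order-two fixed-point-free bijection because $f_G, f_H$ are, it preserves edges because $f_G, f_H$ are automorphisms and the strong-product adjacency only depends on equality and adjacency of each coordinate, and it maps no vertex into its closed neighbourhood because $(g,h)(F(g,h))$ would require $g \sim f_G(g)$ or $g = f_G(g)$ on one coordinate and correspondingly on the other, each of which is impossible since $f_G$ and $f_H$ are oppositions (no fixed points, no adjacent images). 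Then set $\beta = \alpha_G \times \alpha_H$, i.e.\ $\beta(g,h) = (\alpha_G(g), \alpha_H(h))$, which is an order-two automorphism of all of $G \boxtimes H$ by the same edge-preservation argument, and note $\beta|_{V(G\boxtimes H - S)} = F$ because $\alpha_G$ agrees with $f_G$ off $S_G$ and $\alpha_H$ agrees with $f_H$ off $S_H$. This exhibits $F$ as a compatible opposition of $G \boxtimes H - S$, so $G \boxtimes H$ is compatibly almost opposable.

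One wrinkle I would be careful about: $\beta$ must be a genuine automorphism of $G \boxtimes H$, not merely of the deleted subgraph, and in particular $\beta$ need not fix $S$ setwise unless $\alpha_G(S_G) = S_G$ and $\alpha_H(S_H) = S_H$ — but the definition of compatibly almost opposable only demands that $\alpha$ restrict to $f$ on $V(G-S)$, not that $\alpha$ preserve $S$, so this is not actually required; I should make sure the argument doesn't secretly assume it. The main obstacle, and the place I would spend the most care, is checking that the diagonal map $F$ (rather than the one-coordinate map $\hat f$) is an opposition: the edge-preservation step needs the strong product's third clause ($g_1 g_2 \in E(G) \wedge h_1 h_2 \in E(H)$) to be handled, and verifying $F(g,h) \notin N[(g,h)]$ requires a short case analysis on which of the three strong-product adjacency clauses could hold between $(g,h)$ and $(f_G(g), f_H(h))$ — in every case at least one coordinate is either fixed by $f_G$ or $f_H$ or adjacent to its image, contradicting that these are oppositions. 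Everything else is routine bookkeeping on the definitions.
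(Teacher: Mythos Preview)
Your admissible set $S = \{(g,h) : g \in S_G \text{ or } h \in S_H\}$ is \emph{not} admissible in general, and this is a genuine gap. In the strong product one has $N_{G\boxtimes H}[(s,t)] = N_G[s]\times N_H[t]$: for $(g,h)\sim(s,t)$ you need \emph{both} coordinates to lie in the respective closed neighbourhoods, not just one. So if $(g,h)$ has $g\in S_G\subseteq N_G[s]$ but $h\notin N_H[t]$, then $(g,h)\notin N_{G\boxtimes H}[(s,t)]$ even though $(g,h)\in S$. Your sentence ``by the definition of the strong product\dots this gives $(g,h)\in N_{G\boxtimes H}[(s,t)]$'' is precisely where the argument breaks.

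The paper instead takes the much smaller set $S = S_G\times S_H$, which \emph{is} contained in $N_G[s]\times N_H[t]=N_{G\boxtimes H}[(s,t)]$. The price is that $G\boxtimes H - S$ is no longer $(G-S_G)\boxtimes(H-S_H)$: it contains vertices $(x,y)$ with $x\in S_G$ (as long as $y\notin S_H$), so you cannot build the opposition from $f_G$ and $f_H$ alone. This is exactly why compatibility is needed: the paper defines the candidate opposition on $G\boxtimes H - S$ as $(x,y)\mapsto(\alpha_G(x),\alpha_H(y))$, using the global automorphisms, and then checks the non-adjacency condition $F(x,y)\notin N[(x,y)]$ by noting that at least one coordinate lies outside $S_G$ (resp.\ $S_H$), so on that coordinate $\alpha$ agrees with the opposition $f$ and hence is neither equal nor adjacent. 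Your ``diagonal'' map $\beta=\alpha_G\times\alpha_H$ is the right global automorphism, but your restriction $F=f_G\times f_H$ is only defined on the smaller (non-admissibly obtained) subgraph.

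A minor point: your worry that $\alpha_G$ need not preserve $S_G$ is unfounded. Since $\alpha_G$ restricts to the opposition $f_G$ on $V(G)\setminus S_G$, and $f_G$ is a bijection of that set to itself, $\alpha_G$ stabilises $V(G)\setminus S_G$ and hence $S_G$ as well; the paper uses this implicitly.
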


\begin{proof}
    Let $u\in V(G)$ and $u\in S_G\subseteq N_G[u]$ such that $G-S_G$ is
    opposable. Let $g$ be a compatible opposition of $G-S_G$. Let $v\in V(H)$
    and $v\in S_H \subseteq N_H[v]$ such that $H-S_H$ is opposable. Let $h$ be
    a compatible opposition of $H-S_H$. Let $g^\prime :V(G) \to V(G)$ and
    $h^\prime : V(H) \to V(H)$ be automorphisms of order two such that for all
    $x \in V(G) \backslash S_G$ and $y\in V(H) \backslash S_H$, $g^\prime(x) =
    g(x)$ and $h^\prime(y) = h(y)$. Let $S = \{(x,y) \in V(G\boxtimes H)\mid
    x\in S_G, \ y\in S_H\}$. Note that $(u,v) \in S\subseteq N_{G\boxtimes
    H}[(u,v)]$. Define $f: V(G\boxtimes H - S) \to V(G\boxtimes H - S)$ by
    $f((x,y)) = (g^\prime(x), h^\prime(y))$. We claim that $f$ is an opposition
    of $G\boxtimes H - S$.

    Let $(x_1, y_1), (x_2, y_2) \in V(G\boxtimes H - S)$ be distinct. Since
    $g^\prime$ and $h^\prime$ are bijections, $f$ is also a bijection. Suppose
    $(x_1, y_1) \sim_{G\boxtimes H - S} (x_2, y_2)$. If $x_1 = x_2$ then
    $g^\prime (x_1) = g^\prime (x_2)$. If $x_1 \sim_G x_2$ then $g^\prime (x_1)
    \sim_G g^\prime(x_2)$. Similar statements can be made for $y_1$, $y_2$ and
    $h^\prime$. Since $g^\prime$ never maps a vertex outside of $S_G$ to a
    vertex in $S_G$ and $h^\prime$ never maps a vertex outside of $S_H$ to a
    vertex in $S_H$, $f((x_1, y_1)) \sim_{G\boxtimes H - S} f((x_2, y_2))$.
    Therefore, $f$ is an automorphism of $G\boxtimes H - S$.

    Let $(x,y) \in V(G\boxtimes H - S)$. Since $g^\prime$ and $h^\prime$ are
    automorphisms of order two, $f(f((x,y))) = (x,y)$. It remains to show that
    $(x,y) \nsim_{G\boxtimes H - S} f((x,y))$. By the way $S$ is defined,
    either $x\notin S_G$ or $y\notin S_H$. If $x\notin S_G$ then since $g$ is
    an opposition, $g(x) = g^\prime (x) \nsim_G x$. Thus, $(x,y)
    \nsim_{G\boxtimes H - S} (g^\prime(x), h^\prime(y)) = f((x,y))$. If instead
    $x\in S_G$ then $y\notin S_H$. By a similar argument, $h^\prime (y) \nsim_H
    y$ and so $(x,y) \nsim_{G\boxtimes H - S} (g^\prime(x), h^\prime(y)) =
    f((x,y))$. Therefore, $f$ is an opposition. 

    Let $f^\prime : G\boxtimes H \to G\boxtimes H$ be defined by
    $f^\prime((x,y)) = (g^\prime(x), h^\prime(y))$. Using a similar argument as
    with $f$, we have that $f^\prime$ is an automorphism of order two.
    Furthermore, for any $(x,y) \in V(G\boxtimes H - S)$, $f^\prime((x,y)) =
    (g^\prime(x), h^\prime(y)) = f((x,y))$. This proves the theorem. 
\end{proof}

By Theorem \ref{thm: a.o. graphs with automorphism strong product is a.o.}, if
$G$ and $H$ satisfy the conditions of Theorem \ref{thm: a.o. graphs with
automorphism strong product is a.o.} then so does their strong product
$G\boxtimes H$. As a consequence of this, given any finite number of graphs
$G_1, \dots, G_k$ that satisfy the conditions of Theorem \ref{thm: a.o. graphs
with automorphism strong product is a.o.}, the strong product
$\boxtimes^k_{i=1} G_i$ also satisfies the conditions of Theorem \ref{thm: a.o.
graphs with automorphism strong product is a.o.}. One example of this is the
$k$-dimensional strong grid. Since $k$-dimensional strong grids will be
mentioned in Section \ref{sec: peaceable queens game}, here we give the proof
that all $k$-dimensional strong grids are almost opposable. 

\begin{corollary}
    \label{cor: strong grid is a.o.}
    If $k, n_i\in \mathbb{Z}^+$ for each $1\leq i\leq k$, then
    $\boxtimes_{i=1}^k P_{n_i}$ is almost opposable.  
\end{corollary}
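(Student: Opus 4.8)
The plan is to derive \cref{cor: strong grid is a.o.} from the single‑factor case. Concretely, I would first show that every path $P_{n}$ is \emph{compatibly} almost opposable; then \cref{thm: a.o. graphs with automorphism strong product is a.o.}, applied by induction on $k$, yields that $\boxtimes_{i=1}^{k}P_{n_i}$ is compatibly almost opposable, hence in particular almost opposable, because a compatible opposition of $\bigl(\boxtimes_i P_{n_i}\bigr)-S$ is by definition an opposition of it.

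So the real content is the one‑dimensional case. Label $V(P_{n})=\{1,\dots,n\}$ with $i\sim i+1$ and let $\alpha\colon i\mapsto n+1-i$ be the reversal, which is an automorphism of $P_n$ of order exactly two whenever $n\geq 2$. If $n=2m+1$ is odd, I would take the admissible set $S=\{m+1\}$ (indeed $m+1\in S\subseteq N[m+1]$); then $P_n-S$ is the disjoint union of the paths on $\{1,\dots,m\}$ and on $\{m+2,\dots,2m+1\}$, and $\alpha$ carries the first bijectively onto the second. Since these lie in different components, $\alpha(x)\notin N[x]$ for every $x\in V(P_n-S)$, and $\alpha$ has no fixed point there, so $f:=\alpha|_{V(P_n-S)}$ is an opposition of $P_n-S$; being literally a restriction of the global order‑two automorphism $\alpha$, it is a compatible opposition. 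If $n=2m$ is even I would instead delete the central pair $S=\{m,m+1\}$ (admissible with $u=m$), and the same argument works with $P_n-S$ equal to two copies of $P_{m-1}$ swapped by $\alpha$.

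What remains is bookkeeping for the short paths where the two ``halves'' collapse. The case $n=3$ is already covered above, with $P_3-S\cong 2K_1$ and $f$ the transposition of its two isolated vertices. For $n\le 2$ the cleanest route is to first use $P_1\boxtimes H\cong H$ to delete every factor equal to $P_1$: if all $n_i=1$ the product is $K_1$, which is almost opposable via $S=\{v\}$ under the convention that the empty graph is (vacuously) opposable; otherwise one is reduced to factors with $n_i\geq2$, each of which is compatibly almost opposable by the construction above (for $n_i=2$ one takes $S=V(P_2)$ and again uses the empty‑graph convention, with $\alpha$ the transposition supplying the required global automorphism).

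I expect the only real friction to be exactly these degenerate base cases and pinning down the empty‑graph convention; everything else — that $\alpha$ is an automorphism, that swapping two disjoint path‑copies gives an opposition, and that compatibility holds because $\alpha$ globally extends $f$ — is immediate once the correct $S$ is chosen. It is worth checking in passing that the proof of \cref{thm: a.o. graphs with automorphism strong product is a.o.} still goes through when one factor forces $S_{H}=V(H)$ (as happens for $n_i=2$): it does, since that proof only needs that at each vertex of the product at least one coordinate avoids $S_G$ or $S_H$, which holds automatically on $(G\boxtimes H)-S$.
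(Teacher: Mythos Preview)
Your proposal is correct and follows essentially the same route as the paper: both use the reversal automorphism on each $P_n$, delete the middle vertex (odd $n$) or middle two vertices (even $n$) to obtain a compatible opposition, and then invoke \cref{thm: a.o. graphs with automorphism strong product is a.o.} inductively. You are in fact more careful than the paper about the degenerate cases $n\le 2$ and the empty-graph convention, which the paper's proof glosses over.
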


\begin{proof}
    Let $n\in \mathbb{Z}^+$ and let $v_0, \dots, v_{n-1}$ be the vertices of
    $P_n$. Consider the function $f: V(P_n) \to V(P_n)$ defined by $f(v_i) =
    v_{n-i-1}$. Note that $f$ is an automorphism of order two. If $n$ is odd
    then $f$, when restricted to the domain $V(P_n - v_{\frac{n-1}{2}})$, is an
    opposition of $P_n - v_{\frac{n-1}{2}}$. If $n$ is even then $f$, when
    restricted to the domain $V(P_n-\{v_{\frac{n}{2}-1},v_{\frac{n}{2}}\})$, is
    an opposition of $P_n-\{v_{\frac{n}{2}-1},v_{\frac{n}{2}}\}$.

    Therefore, $P_n$ is compatibly almost opposable for any $n\in
    \mathbb{Z}^+$. Thus, by Theorem \ref{thm: a.o. graphs with automorphism
    strong product is a.o.}, $\boxtimes^k_{i=1} P_{n_i}$ where $k, n_1, \dots,
    n_k \in \mathbb{Z}^+$ is almost opposable. 
\end{proof}

The condition in Theorem \ref{thm: a.o. graphs with automorphism strong product
is a.o.} of being compatibly almost opposable is necessary to guarantee that
the strong product is almost opposable. The following counterexample
demonstrates this. 

Let $G$ be the graph in Figure \ref{fig: a.o. auto. strong prod.
counterexample}, with vertices labelled in the same way. Note that $G$ is
almost opposable since $G - \{c,d,g\} \cong P_2 \cup P_2$ is opposable and
$\{c,d,g\} \subseteq N[d]$. There are two oppositions of $G - \{c,d,g\}$: the
automorphism of order two that maps $a$ to $f$ and $b$ to $e$, and the
automorphism of order two that maps $a$ to $e$ and $b$ to $f$. We claim that
$\{c,d,g\}$ is the only admissible sets of vertices such that deleting them
from $G$ yields an opposable graph. Since $G$ has an odd number of vertices,
the number of vertices we delete must be odd for an opposition to be possible.
Deleting any single vertex from $G$ will not yield an opposable graph. The only
admissible sets of vertices of size three are $\{a,b,c\}$, $\{b,c,d\}$,
$\{c,d,e\}$, $\{c,d,g\}$, $\{d,e,g\}$ and $\{d,e,f\}$. Note that $G-\{a,b,c\}
\cong P_4$, $G-\{b,c,d\} \cong 2K_1 \cup P_2$, $G-\{c,d,e\} \cong 2K_1 \cup
P_2$, $G-\{d,e,g\} \cong P_3 \cup K_1$ and $G-\{d,e,f\} \cong P_3 \cup K_1$
which are all not opposable. There are no admissible sets of size five or
larger and so $\{c,d,g\}$ is the only admissible set. 

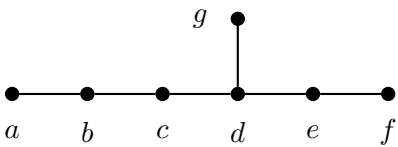
\begin{figure}
    \centering
    \begin{tikzpicture}
        \tikzstyle{vertex}=[circle, draw=black, fill=black, minimum
        size=5pt,inner sep=0pt]

        \node[vertex] at (0,0) (a) {};
        \node[vertex] at (1,0) (b) {};
        \node[vertex] at (2,0) (c) {};
        \node[vertex] at (3,0) (d) {};
        \node[vertex] at (4,0) (e) {};
        \node[vertex] at (5,0) (f) {};
        \node[vertex] at (3,1) (g) {};

        \draw[thick] (a)--(b)--(c)--(d)--(e)--(f);
        \draw[thick] (d)--(g);

        \node[yshift=-0.5cm] at (0,0) {$a$};
        \node[yshift=-0.5cm] at (1,0) {$b$};
        \node[yshift=-0.5cm] at (2,0) {$c$};
        \node[yshift=-0.5cm] at (3,0) {$d$};
        \node[yshift=-0.5cm] at (4,0) (e) {$e$};
        \node[yshift=-0.5cm] at (5,0) (f) {$f$};
        \node[xshift=-0.5cm] at (3,1) (g) {$g$};
    \end{tikzpicture}
    \caption{An almost opposable graph.}
    \label{fig: a.o. auto. strong prod. counterexample}
\end{figure}

First we show that $G$ does not have a nontrivial automorphism. Let $\alpha$ be
an automorphism of $G$. The three leaves, $a$, $f$ and $g$, would be forced to
be mapped to each other by $\alpha$. Since $d$ is the unique vertex of degree
three, $\alpha(d) = d$. However, $\alpha$ must have that $d(x,d) = d(\alpha(x),
d)$ for each $x\in \{a,f,g\}$. The only way this is possible is if each of the
leaves are mapped to themselves. It follows that $\alpha$ maps every vertex to
itself. Therefore $G$ does not have an automorphism that maps $a$ to $f$ and
$b$ to $e$ nor an automorphism that maps $a$ to $e$ and $b$ to $f$. Thus $G$ is
not compatibly almost opposable. 

Consider the graph $G\boxtimes P_4$. We claim that $G\boxtimes P_4$ is not
almost opposable. Let $v_1, v_2, v_3, v_4$ be the vertices of $P_4$. Suppose
for a contradiction that $G\boxtimes P_4$ is almost opposable. Let $S$ be an
admissible set of vertices such that $G\boxtimes P_4 - S$ is opposable. Since
$G$ has no nontrivial automorphisms and since $S$ cannot contain vertices from
both $G.v_1$ and $G.v_4$, an opposition of $G\boxtimes P_4 - S$ must map every
vertex $(x,v_1)\in V(G.v_1)$ to $(x,v_4)\in V(G.v_4)$. Consequently, every
vertex $(x,v_2)\in V(G.v_2)$ must map to $(x,v_3)\in V(G.v_3)$. However,
$(x,v_2) \sim (x, v_3)$. Since $S$ cannot contain all vertices in $G.v_2$ and
$G.v_3$, an opposition of $G\boxtimes P_4 - S$ cannot exist.

While the conditions in Theorem \ref{thm: a.o. graphs with automorphism strong
product is a.o.} are necessary to guarantee that the strong product of two
graphs is almost opposable, the converse of Theorem \ref{thm: a.o. graphs with
automorphism strong product is a.o.} does not hold. 

Let $v_0$, $v_1$ and $v_2$ be the vertices of $P_3$ with $v_0$ and $v_2$ being
the leaves. Let $G$ be the graph in Figure \ref{fig: a.o. auto. strong prod.
counterexample}. Recall that $G$ does not have any nontrivial automorphisms. We
claim that the graph $G\boxtimes P_3$, illustrated in Figure \ref{fig:
counterexample to converse of a.o. strong prod thm} is almost opposable. Let $S
= \{ (c,x) | x\in \{0,1,2\} \} \cup \{ (d,x) | x\in \{0,1,2\} \} \cup \{ (g,x)
| x\in \{0,1,2\} \}$. Note that $S\subseteq N_{G\boxtimes P_3}[(d,v_1)]$. The
graph $G\boxtimes P_3 - S$ is isomorphic to $(P_2 \boxtimes P_3) \cup (P_2
\boxtimes P_3)$ which is opposable. Therefore $G\boxtimes P_3$ is almost
opposable, even though $G$ is not almost opposable. 

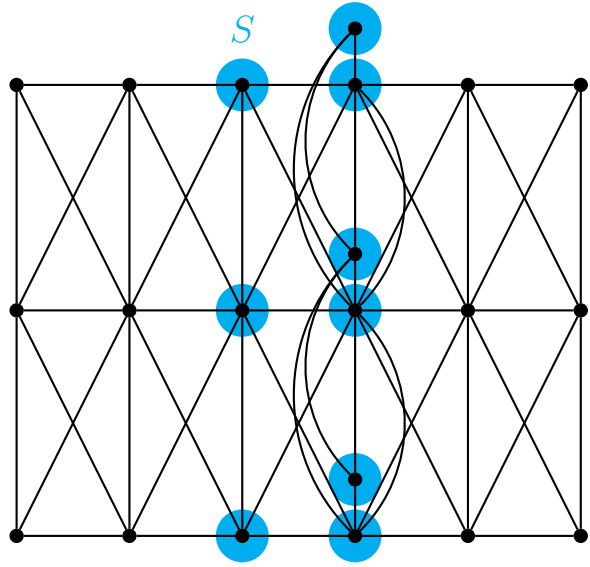
\begin{figure}
    \centering
    \begin{tikzpicture}[scale=0.75]
        \tikzstyle{vertex}=[circle, draw=black, fill=black, minimum
        size=5pt,inner sep=0pt]

        \node at (4, 1) {\Large{\textcolor{cyan}{$S$}}};

        \foreach \x in {0,...,2}
        {

        \node[circle, fill=cyan, minimum size=20pt, yshift=-3*\x cm] at (4,0) {};
        \node[circle, fill=cyan, minimum size=20pt, yshift=-3*\x cm] at (6,0) {};
        \node[circle, fill=cyan, minimum size=20pt, yshift=-3*\x cm] at (6,1) {};

        \node[vertex, yshift=-3*\x cm] at (0,0) (a\x) {};
        \node[vertex, yshift=-3*\x cm] at (2,0) (b\x) {};
        \node[vertex, yshift=-3*\x cm] at (4,0) (c\x) {};
        \node[vertex, yshift=-3*\x cm] at (6,0) (d\x) {};
        \node[vertex, yshift=-3*\x cm] at (8,0) (e\x) {};
        \node[vertex, yshift=-3*\x cm] at (10,0) (f\x) {};
        \node[vertex, yshift=-3*\x cm] at (6,1) (g\x) {};

        \draw[thick] (a\x)--(b\x)--(c\x)--(d\x)--(e\x)--(f\x);
        \draw[thick] (d\x)--(g\x);

        }

        \foreach \x in {a,b,c,e,f}
            \draw[thick] (\x 0)--(\x 1)--(\x 2);

        \draw[thick] (a0)--(b1)--(a2);
        \draw[thick] (b0)--(a1)--(b2)--(c1)--(b0);
        \draw[thick] (c0)--(b1)--(c2)--(d1)--(c0);
        \draw[thick] (d0)--(c1)--(d2)--(e1)--(d0);
        \draw[thick] (e0)--(d1)--(e2)--(f1)--(e0);
        \draw[thick] (f0)--(e1)--(f2);
        \draw[thick] (d0)--(g1);
        \draw[thick] (d1)--(g2);

        \path[thick] (g0) edge [bend right=45] (g1);
        \path[thick] (g1) edge [bend right=45] (g2);
        \path[thick] (d0) edge [bend left=45] (d1);
        \path[thick] (d1) edge [bend left=45] (d2);
        \path[thick] (g0) edge [bend right=45] (d1);
        \path[thick] (g1) edge [bend right=45] (d2);
    \end{tikzpicture}
    \caption{A counterexample for the converse of \cref{thm: a.o. graphs with
    automorphism strong product is a.o.}.}
    \label{fig: counterexample to converse of a.o. strong prod thm}
\end{figure}

The property of $P_3$ that allows this strong product to be almost opposable is
the property that $P_3$ contains a vertex that is adjacent to every other
vertex in $P_3$. We will call such a vertex a \emph{universal} vertex. We can
generalize this result to any almost opposable graph $G$ and any graph with a
universal vertex $H$. 

\begin{theorem}
    \label{thm: a.o. strong product uni. vertex is a.o.}
    If $G$ is almost opposable and $H$ contains a universal vertex, then
    $G\boxtimes H$ is almost opposable. 
\end{theorem}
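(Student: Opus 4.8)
The plan is to mimic the strategy used in \cref{lem: a.o. implies 1st player win}, but now designing an admissible set inside $G\boxtimes H$ built from an admissible set of $G$ and the star centred at the universal vertex of $H$. Let $u\in V(G)$ and $u\in S_G\subseteq N_G[u]$ be such that $G-S_G$ is opposable, with opposition $g$. Let $w$ be a universal vertex of $H$, so $N_H[w]=V(H)$. I would set
\[
    S=\{(x,w)\mid x\in V(G)\}\cup\{(x,y)\mid x\in S_G,\ y\in V(H)\}.
\]
The first task is to check $S$ is admissible: I claim $(u,w)\in S\subseteq N_{G\boxtimes H}[(u,w)]$. Indeed $(u,w)\in S$ since $u\in S_G$, and any $(x,y)\in S$ is adjacent to (or equal to) $(u,w)$ — if $y=w$ then $(x,w)\sim(u,w)$ because $w$ is universal in $H$ (unless $x=u$); and if $x\in S_G$ then $(x,y)\sim(u,w)$ using $x\sim_G u$ or $x=u$ together with $y\sim_H w$ or $y=w$, which is exactly one of the strong-product adjacency clauses. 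So $S\subseteq N_{G\boxtimes H}[(u,w)]$, as required.

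Next I would identify $G\boxtimes H - S$. Deleting the layer $G.w$ and all layers $G.v$ restricted to $S_G$ leaves precisely the vertices $(x,y)$ with $x\in V(G)\setminus S_G$ and $y\in V(H)\setminus\{w\}$; one checks from the strong-product definition that the induced subgraph is $(G-S_G)\boxtimes(H-w)$. Since $G-S_G$ is opposable, \cref{cor: graph products preserve opposable} (applied with the strong product, which satisfies the hypothesis of \cref{thm:graphProdOpposableLogic}) gives that $(G-S_G)\boxtimes(H-w)$ is opposable. Hence $G\boxtimes H$ has an admissible set $S$ whose deletion yields an opposable graph, so $G\boxtimes H$ is almost opposable.

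The one step that needs genuine care is verifying that the induced subgraph on the surviving vertices really is the full strong product $(G-S_G)\boxtimes(H-w)$ and not something smaller — i.e.\ that deleting $S$ removes no edges between surviving vertices. This is immediate from the fact that adjacency in $G\boxtimes H$ between $(x_1,y_1)$ and $(x_2,y_2)$ depends only on the pairs $(x_1,x_2)$ in $G$ and $(y_1,y_2)$ in $H$, so restricting both coordinates to induced subgraphs $G-S_G$ and $H-w$ yields exactly the strong product of those induced subgraphs; no surviving adjacency is destroyed. I would write this observation once, in general form, since it is the crux. (Note this argument does not require $H$ to have any automorphism, matching the earlier remark that the converse of \cref{thm: a.o. graphs with automorphism strong product is a.o.} fails.)
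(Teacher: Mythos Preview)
Your overall strategy---choose an admissible set $S$ and then invoke \cref{cor: graph products preserve opposable} on the deleted graph---is sound, and in fact slightly slicker than the paper's direct construction of the opposition. But there is a genuine error in your choice of $S$ that breaks the admissibility check.

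You include the entire layer $\{(x,w)\mid x\in V(G)\}$ in $S$ and claim that $(x,w)\sim_{G\boxtimes H}(u,w)$ ``because $w$ is universal in $H$''. This is false: both vertices have the \emph{same} second coordinate $w$, so by the strong-product definition $(x,w)\sim(u,w)$ if and only if $x\sim_G u$. Universality of $w$ is irrelevant here. Hence whenever $G$ has a vertex $x\notin N_G[u]$---which is unavoidable, since $G-S_G$ being opposable forces $G-S_G$ (and hence $G$) to have vertices outside $N_G[u]$---your set $S$ contains $(x,w)\notin N_{G\boxtimes H}[(u,w)]$, so $S$ is not admissible.

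The fix is simply to drop that extra layer: take $S=\{(x,y)\mid x\in S_G,\ y\in V(H)\}$. Then $(u,w)\in S\subseteq N_{G\boxtimes H}[(u,w)]$ holds (here is where universality of $w$ is actually used, for the second coordinate), and $G\boxtimes H - S\cong (G-S_G)\boxtimes H$, which is opposable by \cref{cor: graph products preserve opposable}. This is exactly the set the paper uses; the paper then writes out the opposition $f((x,y))=(g(x),y)$ explicitly rather than citing the corollary, but the content is the same.
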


\begin{proof}
    Let $v$ be a universal vertex of $H$. Let $u\in V(G)$ and $u\in S_G
    \subseteq N_G[u]$ such that $G-S_G$ is opposable. Let $g$ be an opposition
    of $G-S_G$. Let $S = \{ (x,y)\in V(G\boxtimes H) \mid x\in S_G, y\in
    V(H)\}$. Note that $(u,v) \in S\subseteq N_{G\boxtimes H}[(u,v)]$. Define
    $f: V(G\boxtimes H - S) \to V(G\boxtimes H - S)$ by $f((x,y)) = (g(x), y)$.
    We claim that $f$ is an opposition of $G\boxtimes H - S$. 

    Let $(x_1, y_1), (x_2, y_2) \in V(G\boxtimes H - S)$ be distinct. Since $g$
    is a bijection, $f$ is also a bijection. Suppose $(x_1, y_1)
    \sim_{G\boxtimes H - S_G} (x_2, y_2)$. If $x_1 = x_2$ then $g(x_1) =
    g(x_2)$. If $x_1 \sim_{G - S_G} x_2$ then $g(x_1) \sim _{G - S_G} g(x_2)$.
    Since $g$ is an automorphism of $G- S_G$, $g$ never maps a vertex outside
    of $S_G$ to a vertex in $S_G$. So $f((x_1,y_1)) = (g(x_1), y_1)
    \sim_{G\boxtimes H - S} (g(x_2), y_2) = f((x_2, y_2))$ and $f$ is an
    automorphism of $G\boxtimes H - S$.

    Let $(x,y) \in V(G\boxtimes H - S)$. Note that $f(f((x,y))) = (x,y)$ and so
    $f$ is of order two. Since $g$ is an opposition of $G-S_G$, $g(x) \nsim_G
    x$. Thus $(x,y) \nsim_{G\boxtimes H - S} (g(x), y) = f((x,y))$. 

    Therefore $f$ is an opposition of $G\boxtimes H - S$ and so $G\boxtimes H$
    is almost opposable. 
\end{proof}

Using similar criteria for two graphs $G$ and $H$ as in Theorem \ref{thm: a.o.
graphs with automorphism strong product is a.o.}, we can construct larger
almost opposable graphs using the Cartesian product.

\begin{theorem}
    \label{thm: a.o. graphs with automorphism Cartesian product is a.o.}
    If $G$ and $H$ are compatibly almost opposable, and there exists a $u\in
    V(G)$ such that $G-u$ is opposable, then $G\square H$ is compatibly almost
    opposable.
\end{theorem}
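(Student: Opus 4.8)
The plan is to follow the proof of \cref{thm: a.o. graphs with automorphism strong product is a.o.} as closely as possible and to patch the one place where it fails for the Cartesian product. Fix the data supplied by the hypotheses: an order‑two automorphism $g'$ of $G$, an admissible set $S_G$ with centre $p$, and a compatible opposition $g = g'|_{V(G)\setminus S_G}$ of $G-S_G$; likewise $h'$, $S_H$ with centre $v$, and $h = h'|_{V(H)\setminus S_H}$ for $H$; and an opposition $\bar g$ of $G-u$. In the strong‑product proof the deleted set is $S_G\times S_H$, which is admissible there because $N_{G\boxtimes H}[(p,v)] = N_G[p]\times N_H[v]$; but in $G\square H$ the closed neighbourhood of $(p,v)$ is only the ``cross'' $(\{p\}\times N_H[v])\cup(N_G[p]\times\{v\})$, so $S_G\times S_H$ is no longer admissible when $|S_G|,|S_H|>1$. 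This is exactly the gap the extra hypothesis fills: since $G-u$ is opposable, $\{u\}$ is itself an admissible set of $G$, so we may work with the cross‑shaped set $S = (\{u\}\times S_H)\cup(S_G\times\{v\})$, which does lie inside $N_{G\square H}[(u,v)]$ and contains $(u,v)$. The first step is this admissibility check, which is routine.

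The next step is to read off the structure of $G\square H - S$ one $G$‑layer at a time: the layer over $v$ is a copy of $G-S_G$, each layer over a vertex of $S_H\setminus\{v\}$ is a copy of $G-u$, and every other layer is a full copy of $G$, and $h'$ permutes these three kinds of layers among themselves. I would then define the candidate opposition $f$ to act ``along $h'$'': $f((x,y)) = (g'(x),h'(y))$ on layers over $V(H)\setminus S_H$, using $g$ on the layer over $v$, and using $\bar g$ on the layers over $S_H\setminus\{v\}$. That $f$ is a fixed‑point‑free involution of $G\square H - S$, and that no vertex is adjacent to its image, follows exactly as in the proof of \cref{thm:graphProdOpposableLogic}: one expands the two disjuncts of the Cartesian adjacency condition and uses that $g$ and $\bar g$ are oppositions and that $h$ has no fixed point. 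For the compatibility clause one checks, as in the strong‑product proof, that $(x,y)\mapsto(g'(x),h'(y))$ is an order‑two automorphism of all of $G\square H$ that restricts to $f$ on $V(G\square H - S)$.

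The one genuinely new point — and, I believe, the reason both hypotheses are imposed rather than plain almost opposability — is edge‑preservation for the edges of $G\square H - S$ that join a layer over some $y\in S_H$ to a layer over an adjacent $y'\notin S_H$. Such an edge is $(x,y)(x,y')$ with $x\neq u$, and $f$ sends it to a pair with distinct but adjacent $H$‑coordinates $h'(y),h'(y')$, so the images form a Cartesian edge only if their $G$‑coordinates coincide, i.e.\ only if $\bar g(x) = g'(x)$ (or $g(x) = g'(x)$ when $y=v$). The case $y=v$ is fine since $g'$ and $g$ agree off $S_G$; the delicate case $y\in S_H\setminus\{v\}$ is where I expect the work to lie, and the clean way to dispose of it is to arrange that $\bar g$ is the restriction to $V(G)\setminus\{u\}$ of an order‑two automorphism of $G$ fixing $u$ — equivalently, to reduce to the case $S_G=\{u\}$ from the outset, in which case $S=\{u\}\times S_H$, no layer is exceptional, and $f$ becomes the honest product automorphism $g'\times h'$ restricted to an induced subgraph, so edge‑preservation is automatic. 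The step I would expect to require the most care is establishing that this reduction is always available from ``$G$ is compatibly almost opposable and $G-u$ is opposable''; once it is in hand, the remainder is bookkeeping of precisely the kind already carried out for \cref{thm: a.o. graphs with automorphism strong product is a.o.}.
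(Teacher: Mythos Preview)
Your cross-shaped set $S = (\{u\}\times S_H)\cup(S_G\times\{v\})$ is not admissible in general: you centre it at $(u,v)$, but $S_G$ was chosen admissible in $G$ with centre $p$, so $S_G\times\{v\}\subseteq N_{G\square H}[(u,v)]$ would require $S_G\subseteq N_G[u]$, and nothing in the hypotheses ties $p$ to $u$. The slip is mostly harmless since you abandon this construction anyway, but it is a genuine error in the plan as stated.

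More to the point, the paper does none of your layer-by-layer patching and attempts no reduction. Its proof opens with ``Let $g$ be a compatible opposition of $G-u$'': it simply reads the combined hypothesis on $G$ as saying that the singleton $\{u\}$ is the witnessing admissible set for compatible almost opposability. Under that reading there is no separate $(S_G,p)$ at all; one takes $S=\{u\}\times S_H$, which sits inside the cross $N_{G\square H}[(u,v)]$ automatically, and the single product map $f((x,y))=(g'(x),h'(y))$ throughout. Every layer is then of the same type, and the verification that $f$ is a compatible opposition of $(G\square H)-S$ is essentially identical to the strong-product argument. So your eventual target --- the ``$S_G=\{u\}$'' case with $f=g'\times h'$ --- \emph{is} the paper's proof; you reached it by a detour through a more complicated construction that you then correctly diagnosed as unworkable.

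The step you single out as requiring the most care, namely deducing from ``$G$ compatibly almost opposable and $G-u$ opposable'' that some opposition of $G-u$ extends to an order-two automorphism of $G$, is never carried out in the paper: it is taken as the intended meaning of the hypotheses rather than proved. Your instinct that this does not follow from the two clauses read independently is sound, but for the purposes of matching the paper you should simply adopt that reading and proceed directly with $S=\{u\}\times S_H$.
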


\begin{proof}
    Let $u\in V(G)$ such that $G-u$ is opposable. Let $g$ be a compatible
    opposition of $G-u$. Let $v\in V(H)$ and $v\in S_H \subseteq N_H[v]$ such
    that $H-S_H$ is opposable. Let $h$ be a compatible opposition of $H-S_H$.
    Let $g^\prime :V(G) \to V(G)$ and $h^\prime : V(H) \to V(H)$ be
    automorphisms of order two such that for all $x \in V(G) \backslash S_G$
    and $y\in V(H) \backslash S_H$, $g^\prime(x) = g(x)$ and $h^\prime(y) =
    h(y)$ Let $S = \{(x,y) \in V(G\square H) | x=u, y\in S_H\}$. Define $f:
    V(G\square H - S) \to V(G\square H - S)$ by $f((x,y)) = (g^\prime(x),
    h^\prime(y))$. We claim that $f$ is an opposition of $G\square H - S$. 

    By using the same argument as in the proof of Theorem \ref{thm: a.o. graphs
    with automorphism strong product is a.o.}, we get that $f$ is well-defined,
    injective and surjective. Let $(x_1, y_1), (x_2, y_2) \in V(G\square H -
    S)$. If $(x_1, y_1) \sim_{G\square H - S} (x_2, y_2)$ then either $x_1=x_2$
    and $y_1y_2\in E(H)$ or $x_1x_2\in E(G)$ and $y_1=y_2$. If $x_1=x_2$ then
    since $g^\prime$ and $h^\prime$ are automorphisms, $f((x_1,y_1)) =
    (g^\prime(x_1), h^\prime(y_1)) = (x_2, h^\prime(y_1)) \sim_{G\square H - S}
    (x_2, h^\prime(y_2)) = (g^\prime(x_2), h^\prime(y_2)) = f((x_2,y_2))$. Thus
    $f$ is an automorphism and $f$ is of order two. If $(x_1, y_1)
    \nsim_{G\square H - S} (x_2, y_2)$ then either $x_2 \notin N_{G}[x_1]$ or
    $y_2\notin N_H[y_1]$. Without loss of generality, suppose $x_2 \notin
    N_G[x_1]$. If $x_1\neq u$ then $g^\prime(x_2) \notin N_{G}[g^\prime(x_1)]$
    and so $f((x_1, y_1)) \nsim_{G\square H - S} f((x_2, y_2))$. If $x_1 = u$
    then $y_1 \notin S_H$. Thus $h^\prime(y_1) \nsim_{H-S_H} h^\prime(y_2)$ and
    so $f((x_1, y_1)) \nsim_{G\square H - S} f((x_2, y_2))$. Therefore $f$ is
    an opposition of $G\square H - S$.

    Define $f^\prime : V(G\square H) \to V(G\square H)$ by $f((x,y)) =
    (g^\prime(x), h^\prime(y))$. Using a similar argument as with $f$, we have
    that $f$ is an automorphism of order two. Furthermore, for any $(x,y) \in
    V(G\square H - S)$ we have that $f^\prime((x,y)) = f((x,y))$. This proves
    the theorem. 
\end{proof}

Independently of Kakihara and Arroyo, Uiterwijk \cite{uiterwijk:solving}
characterized the outcomes of \textsc{snort} on $P_n \square P_m$ by using the
opposability of $P_n \square P_m$ when both $n$ and $m$ are even and the almost
opposability of $P_n \square P_m$ when either $n$ and $m$ are odd. By using
\cref{cor: graph products preserve opposable} and \cref{thm: a.o. graphs with
automorphism Cartesian product is a.o.}, we can generalize Uiterwijk's result
to $k$-dimensional Cartesian grids.

\begin{corollary}
    \label{cor: k-dim Cartesian grids}
    If $k\in \mathbb{Z}^+$ and $n_1,\dots, n_k \in \mathbb{Z}^+$, then the
    graph $\square^k_{i=1} P_{n_i}$ is almost opposable if and only if there is
    at most one $n_i$ that is even; otherwise, $\square^k_{i=1} P_{n_i}$ is
    opposable.
\end{corollary}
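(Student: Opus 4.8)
The plan is to prove the two halves of the biconditional separately and then tie them together with the basic game-theoretic observation that an opposable graph cannot also be almost opposable. For the implication ``at most one $n_i$ is even $\Rightarrow$ $\square_{i=1}^k P_{n_i}$ is almost opposable'', I would in fact prove the stronger statement that such a grid is \emph{compatibly} almost opposable, by induction on $k$. The base case $k=1$ is precisely what the proof of \cref{cor: strong grid is a.o.} gives: every path $P_n$ is compatibly almost opposable, and when $n$ is odd its central vertex is a single vertex whose deletion leaves an opposable graph. For the inductive step, assume at most one of $n_1,\dots,n_k$ is even; since $k\geq 2$ at least one factor $P_{n_j}$ is odd, and by the base case $P_{n_j}$ is compatibly almost opposable with a vertex $u$ such that $P_{n_j}-u$ is opposable. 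The remaining grid $H:=\square_{i\neq j}P_{n_i}$ has $k-1$ factors, at most one even, so it is compatibly almost opposable by the inductive hypothesis. Applying \cref{thm: a.o. graphs with automorphism Cartesian product is a.o.} to $G=P_{n_j}$ and $H$ shows $P_{n_j}\square H\cong\square_{i=1}^k P_{n_i}$ is compatibly almost opposable, hence almost opposable. (Reordering the factors is harmless, since $\square$ is commutative and associative up to isomorphism and all of these properties are isomorphism invariants.)

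For the implication ``at least two $n_i$ are even $\Rightarrow$ $\square_{i=1}^k P_{n_i}$ is opposable'', I would first record that a two-dimensional grid $P_a\square P_b$ with $a$ and $b$ both even is opposable: writing $v_0,\dots,v_{a-1}$ and $w_0,\dots,w_{b-1}$ for the two paths, the map $f(v_i,w_j)=(v_{a-1-i},w_{b-1-j})$ is the product of the reflection $v_i\mapsto v_{a-1-i}$ of $P_a$ and the reflection $w_j\mapsto w_{b-1-j}$ of $P_b$, hence an automorphism; it squares to the identity; it has no fixed point because $a,b$ even makes $a-1-i=i$ and $b-1-j=j$ impossible; and $f(x)$ differs from $x$ in both coordinates, so in a Cartesian product $f(x)$ is neither equal nor adjacent to $x$. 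Thus $f$ is an opposition. Now reorder so that $n_1,n_2$ are even; then $\square_{i=1}^k P_{n_i}\cong(P_{n_1}\square P_{n_2})\square\bigl(\square_{i=3}^k P_{n_i}\bigr)$ is the Cartesian product of an opposable graph with an arbitrary graph, so by \cref{cor: graph products preserve opposable} it is opposable when $k\geq3$, while for $k=2$ it is opposable by the direct check.

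Finally, opposability and almost opposability are mutually exclusive: an opposable graph is second-player win by \cref{lem: opposable implies second player wins snort}, whereas an almost opposable graph is first-player win by \cref{lem: a.o. implies 1st player win}, and a single game of \textsc{snort} cannot be both. Hence ``at least two $n_i$ even'' yields a graph that is opposable and therefore not almost opposable, which promotes the first implication to the claimed ``if and only if''. I expect the only real friction to be in the inductive step of the first part: one must carry the \emph{compatible} version of almost opposability through the induction, since plain almost opposability does not obviously feed back into \cref{thm: a.o. graphs with automorphism Cartesian product is a.o.}, and the hypothesis ``at most one $n_i$ even'' is used exactly to guarantee that at every stage there is an odd factor---equivalently, a single vertex whose removal produces an opposable graph---available to serve as $G$.
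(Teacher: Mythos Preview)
Your proof is correct and follows the same strategy as the paper: \cref{cor: graph products preserve opposable} together with an explicit opposition on $P_a\square P_b$ (for $a,b$ even) handles the opposable case, and an induction through \cref{thm: a.o. graphs with automorphism Cartesian product is a.o.} handles the almost opposable case. Your induction is in fact a little cleaner than the paper's---the paper splits into ``all $n_i$ odd'' and a separate ``exactly one even'' case, whereas you uniformly peel off an odd factor to serve as $G$---and you make explicit the mutual exclusivity of opposable and almost opposable via \cref{lem: opposable implies second player wins snort} and \cref{lem: a.o. implies 1st player win}, which the paper leaves unsaid but which is needed for the ``only if'' direction of the biconditional.
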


\begin{proof}
    Without loss of generality, suppose $n_1$ and $n_2$ are even. Note that
    $\square^k_{i=1} P_{n_i}$ is isomorphic to $(P_{n_1} \square P_{n_2})
    \square (\square_{\ell = 3}^k P_{n_\ell})$.

    We claim that $P_{n_1} \square P_{n_2}$ is opposable. Let $x_1, \dots,
    x_{n_1}$ be the vertices of $P_{n_1}$ such that $x_i \sim x_{i+1}$ for each
    $1\leq i\leq n_1 - 1$. Let $y_1, \dots, y_{n_2}$ similarly denote the
    vertices of $P_{n_2}$. The function $f: V(P_{n_1}) \to V(P_{n_2})$ defined
    by $f((x_i, y_j)) = (x_{n_1 - i + 1}, y_{n_2 - j + 1})$ is an opposition of
    $P_{n_1} \square P_{n_2}$ and so $P_{n_1} \square P_{n_2}$ is opposable.
    Since $P_{n_1} \square P_{n_2}$ is opposable, $\square^k_{i=1} P_{n_i}$ is
    opposable by \cref{cor: graph products preserve opposable}. 

    Suppose each $n_i$ is odd. Note that any path on an odd number of vertices
    satisfies the conditions of Theorem \ref{thm: a.o. graphs with automorphism
    Cartesian product is a.o.}. Thus the Cartesian product of any two paths
    also satisfies the conditions of Theorem \ref{thm: a.o. graphs with
    automorphism Cartesian product is a.o.}. By induction, $\square^k_{i=1}
    P_{n_i}$ is almost opposable and satisfies the conditions of Theorem
    \ref{thm: a.o. graphs with automorphism Cartesian product is a.o.}.

    Now suppose there is exactly one $n_i$ that is odd. Without loss of
    generality, assume $n_k$ is odd. The graph $\square^k_{i=1} P_{n_i}$ is
    isomorphic to $(\square^{k-1}_{i=1} P_{n_i}) \square (P_k)$. By the
    argument made in the case where $n_i$ is odd for all $i$,
    $\square^{k-1}_{i=1} P_{n_i}$ satisfies the conditions of Theorem \ref{thm:
    a.o. graphs with automorphism Cartesian product is a.o.}. Therefore,
    $(\square^{k-1}_{i=1} P_{n_i}) \square (P_k)$ is almost opposable. 
\end{proof}

Note that the condition in Theorem \ref{thm: a.o. graphs with automorphism
Cartesian product is a.o.} of being compatibly almost opposable is necessary to
guarantee that the Cartesian product yields an almost opposable graph. To see
this, let $G$ be the graph in Figure \ref{fig: a.o. auto. strong prod.
counterexample} with the vertices labelled in the same way. Recall that we
showed $G$ is almost opposable but does not have any nontrivial automorphisms.
We claim that the graph $G\square P_3$ is not almost opposable. 

Let $v_1,v_2,v_3$ be the vertices of $P_3$. Suppose for a contradiction
that $G\square P_3$ is almost opposable. Let $S$ be an admissible set such
that $G\square P_3 - S$ is opposable. If $x\in V(G)$ and $(x,v_1), (x,v_3)
\notin S$ then any automorphism of $G\square P_3 -S$ is forced to map
$(x,v_1)$ to $(x,v_3)$. For any given $x\in V(G)$, since the only vertex
adjacent to both $(x,v_1)$ and $(x,v_3)$ is $(x,v_2)$, if $(x,v_1), (x,v_3)
\notin S$ then any automorphism of $G\square P_3 - S$ is forced to map
$(x,v_2)$ to itself. Therefore, for $G-S$ to be opposable either $S$
contains all vertices in $V(G.v_2)$ or $S$ contains vertices that form a
set
\[
    \{(x,v_{i_x})\mid\text{$x\in V(G)$ and for each $x\in V(G)$, either $i_x =
    1$ or $i_x = 2$}\}.
\]
Neither of these scenarios is possible since $S$ is an admissible set and so
$G\square P_3$ is not almost opposable.

\section{Peaceable Queens Game}
\label{sec: peaceable queens game}

Consider a game played on a chessboard where two players take turns placing a
queen on an unoccupied space of the board. The player to go first always places
white queens while the player that moves second always places black queens.
Neither player is allowed to place a queen that is within attacking range of
any of the queens previously placed by their opponent. If one of the players is
unable to place a queen on the chessboard, then that player loses the game. We
call this the \emph{Peaceable Queens game} and in this section we explore the
outcomes of the game for $n\times m$ chessboards.

Denote $\textsc{pq}(n,m)$ for the Peaceable Queens Game on an $n\times m$ grid.
We can also define similar games using other chess pieces: we write
$\textsc{pr}(n,m)$, $\textsc{pn}(n,m)$, $\textsc{pb}(n,m)$, and
$\textsc{pk}(n,m)$ for the Peaceable Rooks Game, the Peaceable Knights Game,
the Peaceable Bishops Game, and the Peaceable Kings Game respectively. 

Each of these peaceable chess piece games is isomorphic to particular
\textsc{snort} positions. Consider an $n\times m$ array of vertices labelled
$(x,y)$ where $0\leq x\leq n-1$ and $0\leq y\leq m-1$. The vertex $(x,y)$
corresponds to the square $(x,y)$ on the chessboard. Two vertices $(x_1, y_1)$
and $(x_2, y_2)$ will be adjacent in the array if the corresponding squares on
the chessboard are within attacking range of each other based on the chess
piece being considered. For example, the vertices $(0,0)$ and $(4,4)$ are
adjacent to each other when considering Queens but not when we are considering
Knights. We will refer to the graph generated by considering Queens as the
\emph{Queen's grid}. We similarly define the \emph{Bishop's grid}, the
\emph{Knight's grid}, the \emph{Rook's grid} and the \emph{King's grid}. The
Peaceable Queens game is isomorphic to \textsc{snort} played on the Queen's
grid and similar statements can be made for the other grids. To determine the
outcomes of each of the peaceable games, we will show that corresponding grid
graphs are either opposable or almost opposable. For each of the proofs we will
label the vertices the same way as above.

We begin with the Peaceable Kings game as its outcomes follow directly from
Corollary \ref{cor: strong grid is a.o.}.

\begin{theorem}
    \label{thm: PK outcomes}
    The second player never wins $\textsc{pk}(n,m)$. 
\end{theorem}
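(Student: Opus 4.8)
The plan is to recognise $\textsc{pk}(n,m)$ as \textsc{snort} played on a strong product of two paths and then invoke the results already in hand. First I would check that the King's grid on an $n\times m$ board is precisely $P_n\boxtimes P_m$: labelling squares $(x,y)$ with $0\le x\le n-1$ and $0\le y\le m-1$, two distinct squares $(x_1,y_1)$ and $(x_2,y_2)$ are within a king's attacking range exactly when $|x_1-x_2|\le 1$ and $|y_1-y_2|\le 1$, and this is exactly the adjacency relation of $P_n\boxtimes P_m$ (using that $x_i\sim x_j$ in $P_n$ iff $|i-j|=1$; the three disjuncts in the strong-product formula correspond to a purely horizontal, a purely vertical, and a diagonal king move). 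Since, by construction, the Peaceable Kings game is \textsc{snort} on the King's grid, it suffices to determine the outcome of \textsc{snort} on $P_n\boxtimes P_m$.

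Next I would apply \cref{cor: strong grid is a.o.} with $k=2$, $n_1=n$, $n_2=m$ to conclude that $P_n\boxtimes P_m$ is almost opposable. Then \cref{lem: a.o. implies 1st player win} immediately gives that the first player wins \textsc{snort} on $P_n\boxtimes P_m$. Finally, since \textsc{snort} on an untinted graph is a symmetric form (it is isomorphic to its conjugate), its outcome is either first-player win or second-player win; having established the former, the second player never wins $\textsc{pk}(n,m)$, which is the claim.

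There is essentially no obstacle here: the result is a direct corollary of \cref{cor: strong grid is a.o.} once the identification of the King's grid with $P_n\boxtimes P_m$ is made. The only point worth a remark is the degenerate case where $n$ or $m$ equals $1$ (a single-row board); but \cref{cor: strong grid is a.o.} is stated for all positive integers, and $P_1\boxtimes P_m\cong P_m$ is almost opposable, so these cases are covered automatically. If one preferred a self-contained argument, one could instead exhibit directly an admissible set of vertices of the King's grid whose deletion leaves an opposable graph and write down the corresponding opposition, but re-using \cref{cor: strong grid is a.o.} is considerably cleaner.
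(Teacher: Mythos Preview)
Your proposal is correct and follows essentially the same approach as the paper: identify the King's grid with $P_n\boxtimes P_m$, apply \cref{cor: strong grid is a.o.} to get almost opposability, and then invoke \cref{lem: a.o. implies 1st player win}. You include a bit more detail (the explicit adjacency check, the symmetric-form remark, and the $n=1$ edge case), but the argument is the same.
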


\begin{proof}
    The $n\times m$ King's grid is isomorphic to $P_n \boxtimes P_m$. By
    Corollary \ref{cor: strong grid is a.o.}, all $k$-dimensional strong grids
    are almost opposable, and so every King's grid is almost opposable. By
    Lemma \ref{lem: a.o. implies 1st player win}, the first player wins
    \textsc{snort} on every King's grid. 
\end{proof}

For the Peaceable Knights, Bishops, and Rooks games, we completely determine
the outcomes by directly showing the corresponding grids are opposable or
almost opposable.

\begin{theorem}
    \label{thm: PN outcomes}
    The second player wins $\textsc{pn}(n,m)$ if and only if either $n$ or $m$
    are even.
\end{theorem}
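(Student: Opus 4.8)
The plan is to split on the parity of $n$ and $m$, applying \cref{lem: opposable implies second player wins snort} when at least one of $n,m$ is even and \cref{lem: a.o. implies 1st player win} when both are odd; since \textsc{snort} is a finite game under normal play, ``not second-player win'' is the same as ``first-player win'', so these two cases suffice.

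First I would treat the case that (say) $n$ is even. I claim the Knight's grid admits the horizontal reflection $f\colon (x,y)\mapsto(n-1-x,\,y)$ as an opposition. Reflecting preserves the unordered pair $\{|x_1-x_2|,|y_1-y_2|\}$ of coordinate differences, so $f$ is an automorphism of the Knight's grid, and it clearly has order two. It is fixed-point-free because $x=n-1-x$ has no integer solution when $n$ is even. Finally, the displacement from $(x,y)$ to $f(x,y)$ is $(n-1-2x,\,0)$; its second coordinate is $0$, so it is never a knight's move, and hence $f(v)\notin N[v]$ for every $v$. Thus the Knight's grid is opposable, and by \cref{lem: opposable implies second player wins snort} the second player wins $\textsc{pn}(n,m)$. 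The case where $m$ is even is symmetric, using the vertical reflection.

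Next I would treat the case that $n$ and $m$ are both odd. Then the board has a central square $c=\bigl(\tfrac{n-1}{2},\tfrac{m-1}{2}\bigr)$, which is the unique fixed point of the $180^\circ$ rotation $\rho\colon(x,y)\mapsto(n-1-x,\,m-1-y)$. I claim that $\{c\}$ witnesses that the Knight's grid is almost opposable: the singleton $\{c\}$ is admissible since $c\in\{c\}\subseteq N[c]$, and the restriction of $\rho$ to the Knight's grid minus $c$ is an opposition. Indeed, $\rho$ preserves coordinate differences and so is an automorphism, its only fixed point has been deleted so the restriction is a fixed-point-free order-two automorphism, and the displacement from $(x,y)$ to $\rho(x,y)$ is $(n-1-2x,\,m-1-2y)$, which has both coordinates even and therefore is never a knight's move. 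Hence the Knight's grid is almost opposable, so by \cref{lem: a.o. implies 1st player win} the first player wins $\textsc{pn}(n,m)$, i.e.\ the second player does not. Together with the previous case this gives the stated equivalence.

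The one point worth flagging is the choice of automorphism: the ``obvious'' candidate, the $180^\circ$ rotation, actually fails exactly when precisely one of $n,m$ is even, since then $(n-1-2x,\,m-1-2y)$ has one odd and one even coordinate and can genuinely equal $(\pm1,\pm2)$, mapping some vertex to an adjacent one. Switching to an axis reflection in an even direction avoids this, because a reflection always yields a displacement with a zero coordinate. The remaining verifications (that each map is an automorphism of the Knight's grid, is fixed-point-free, etc.) are routine, and the degenerate small boards — for instance $1\times m$, where the Knight's grid is edgeless — are handled by the same arguments without change.
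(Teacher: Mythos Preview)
Your proof is correct and follows essentially the same approach as the paper: the horizontal reflection $(x,y)\mapsto(n-1-x,y)$ when $n$ is even, and the $180^\circ$ rotation $(x,y)\mapsto(n-1-x,m-1-y)$ with the centre removed when both are odd. Your parity argument for the odd--odd case (both displacement coordinates are even, hence never a knight's move) is in fact a bit cleaner than the paper's version.
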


\begin{proof}
    In the Knight's grid, two vertices $(x_1, y_1)$ and $(x_2, y_2)$ are
    adjacent if and only if either 
    \begin{itemize}
        \item $x_2 = x_1 \pm 1$ and $y_2 = y_1 \pm 2$ or 
        \item $x_2 = x_1 \pm 2$ and $y_2 = y_1 \pm 1$.
    \end{itemize}

    Suppose, without loss of generality, that $n$ is even. Define $f: V(G_K)
    \to V(G_K)$ by $f((x,y)) = (n-x-1, y)$. Note that $f$ is an automorphism of
    order two. Since $n$ is even, $f$ has no fixed points. Furthermore, since
    $y\neq y \pm 1$ and $y\neq y \pm 2$, $f((x,y))$ and $(x,y)$ are not
    adjacent. Therefore $f$ is an opposition of $G_K$. Thus by Lemma \ref{lem:
    opposable implies second player wins snort}, the second player wins
    \textsc{snort} on $G_K$ and so the second player wins $\textsc{pn}(n,m)$. 

    Instead suppose both $n$ and $m$ are odd. Let $g: V(G_K - (\frac{n-1}{2},
    \frac{m-1}{2})) \to V(G_K - (\frac{n-1}{2}, \frac{m-1}{2}))$ be defined by
    $g((x,y)) = (n-x-1, m-y-1)$. We claim that $g$ is an opposition of $G_K -
    (\frac{n-1}{2}, \frac{m-1}{2})$. Note that $g$ is an automorphism of order
    two with no fixed points. Suppose for a contradiction that $(x,y)$ and
    $g((x,y))$ were adjacent. Then either $n-x-1 = x\pm 1$ or $n-x-1 = x\pm 2$.
    Thus either $n-2x-1 = \pm 1$ or $n-2x-1 \pm 2$. Since $2x-1$ is odd and $n$
    is odd, $n-2x-1$ is odd. So $n-2x-1 \neq \pm 2$ and consequently $n-2x-1 =
    \pm 1$. This forces $m-y-1 = y\pm 2$ which is a contradiction since $m$ and
    $2y-1$ are both odd. Therefore $g((x,y))$ and $(x,y)$ are not adjacent for
    any $0\leq x\leq n-1$ and $0\leq y\leq m-1$. Since $g$ is an opposition of
    $G_K - (\frac{n-1}{2}, \frac{m-1}{2})$, $G_K$ is almost opposable. By Lemma
    \ref{lem: a.o. implies 1st player win}, the first player wins
    \textsc{snort} on $G_K$ and so $\textsc{pn}$ is first player win when both
    $n$ and $m$ are odd.
\end{proof}

\begin{theorem}
    \label{thm: PB outcomes}
    The second player wins $\textsc{pb}(n,m)$ if and only if either $n$ or $m$
    are even.
\end{theorem}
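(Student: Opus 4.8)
The plan is to mirror the two-case structure already used for the Knight's grid in \cref{thm: PN outcomes}, but with reflections adapted to the bishop adjacency. First I would record that in the Bishop's grid $G_B$ on the $n\times m$ board, two distinct vertices $(x_1,y_1)$ and $(x_2,y_2)$ are adjacent precisely when $x_1-y_1=x_2-y_2$ or $x_1+y_1=x_2+y_2$, i.e.\ when they share a diagonal or an anti-diagonal. All the maps I use are built from $x\mapsto n-1-x$ and $y\mapsto m-1-y$, each of which sends the diagonal index $x-y$ and the anti-diagonal index $x+y$ to affine functions of themselves (possibly interchanging the two roles), so any such map is automatically an automorphism of $G_B$; I would state this once and reuse it.

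For the ``if'' direction, assume without loss of generality that $n$ is even and set $f(x,y)=(n-1-x,y)$. This is an order-two automorphism with no fixed point, since $2x=n-1$ has no solution for even $n$, and $(x,y)\not\sim f(x,y)$ because both $x-y=(n-1-x)-y$ and $x+y=(n-1-x)+y$ reduce to the same impossible equation $2x=n-1$. Hence $f$ is an opposition, and \cref{lem: opposable implies second player wins snort} gives that the second player wins $\textsc{pb}(n,m)$.

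For the ``only if'' direction, assume $n$ and $m$ are both odd; I would show $G_B$ is almost opposable, so that \cref{lem: a.o. implies 1st player win} yields a first-player win (hence the second player does not win). Let $c=\bigl(\tfrac{n-1}{2},\tfrac{m-1}{2}\bigr)$ be the centre and $g(x,y)=(n-1-x,m-1-y)$ the $180^\circ$ rotation, an order-two automorphism of $G_B$ whose unique fixed point is $c$. The crux is the computation that $(x,y)\sim g(x,y)$ exactly when $x-y=\tfrac{n-m}{2}$ or $x+y=\tfrac{n+m}{2}-1$, that is, exactly when $(x,y)$ lies on the diagonal or anti-diagonal through $c$; in other words the set of vertices $g$ sends to a neighbour is precisely $N[c]$. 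Since $N[c]$ is the union of two cliques (a diagonal and an anti-diagonal) meeting at $c$, and $g$ preserves each of these two lines set-wise (they are the $g$-invariant lines through its fixed point), $g$ restricts to a bijection of $V(G_B)\setminus N[c]$ onto itself, with no fixed point and no vertex mapped to a neighbour; thus $g|_{G_B-N[c]}$ is an opposition of $G_B-N[c]$. As $c\in N[c]\subseteq N[c]$, the set $N[c]$ is admissible, so $G_B$ is almost opposable, completing the case.

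The main obstacle — and the genuine difference from the Knight's grid, where deleting the single centre vertex sufficed — is that for bishops the rotation $g$ fixes whole diagonals set-wise and carries many non-central vertices to adjacent vertices, so one must delete more than $\{c\}$. The key realisation is that the ``bad'' set is exactly $N[c]$, which happens to be admissible precisely because diagonals and anti-diagonals are cliques; the only real verifications are the adjacency computation above and the fact that $g(N[c])=N[c]$. Everything else (order two, the automorphism property, and the even case) is routine and parallels \cref{thm: PN outcomes}.
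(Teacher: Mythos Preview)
Your proposal is correct and follows essentially the same approach as the paper: the same reflection $f(x,y)=(n-1-x,y)$ in the even case and the same $180^\circ$ rotation $g$ together with deletion of $N[c]$ in the odd case, with the identical computation showing that the ``bad'' vertices for $g$ are exactly $N[c]$. Your use of the diagonal/anti-diagonal indices $x\pm y$ in place of the paper's $|x_1-x_2|=|y_1-y_2|$ is only a cosmetic reformulation, and your explicit remark that $g(N[c])=N[c]$ (so the restriction is well-defined) is a nice touch the paper leaves implicit.
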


\begin{proof}
    In the Bishop's grid $G_B$, two vertices $(x_1, y_1)$ and $(x_2, y_2)$ are
    adjacent if and only if $|x_1 - x_2| = |y_1 - y_2|$. 

    Without loss of generality, suppose $n$ is even. We define $f: V(G_B) \to
    V(G_B)$ by $f((x,y)) = (n-x-1, y)$. Note that $f$ is a automorphism of
    order two. Since $n$ is even, $x \neq n-x-1$ and so $f$ has no fixed
    points. Suppose for a contradiction that for some $0\leq x\leq n-1$ and
    $0\leq y\leq m-1$, $(x,y)$ and $(n-x-1, y)$ are adjacent to each other.
    Then $|x-(n-x-1)| = |y-y|$ and so $n=2x+1$ which contradicts $n$ being
    even. Therefore $f$ is an opposition. Thus by Lemma \ref{lem: opposable
    implies second player wins snort}, the second player wins on $G_B$. 

    Now suppose that both $n$ and $m$ are odd. We claim that $G_B$ is almost
    opposable. Consider the subgraph $H = G_B - N[(\frac{n-1}{2},
    \frac{m-1}{2})]$. We define $f:V(H) \to V(H)$ by $f((x,y)) = (n-x-1,
    m-y-1)$ and claim that $f$ is an opposition of $H$. Note that $f$ is an
    automorphism of order two with no fixed points. Let $(x,y) \in V(H)$ and
    suppose for a contradiction that $(x,y)$ is adjacent to $f((x,y))$. Then
    $|x - (n-x-1)| = |y - (m-y-1)|$ and so $|2x+1-n| = |2y+1-m|$. Thus either 
    \begin{equation}
        \label{eq: Bishops 1st}
        2x+1-n = 2y+1-m
    \end{equation}
    or 
    \begin{equation}
        \label{eq: Bishops 2nd}
        2x+1-n = m-(2y+1).
    \end{equation}
    If Equation \eqref{eq: Bishops 1st} holds then rearranging yields $x-
    \frac{n-1}{2} = y - \frac{m-1}{2}$. Thus $|x- \frac{n-1}{2}| = |y -
    \frac{m-1}{2}|$ and so $(x,y) \in N[(\frac{n-1}{2}, \frac{m-1}{2})]$ which
    is a contradiction. If Equation \eqref{eq: Bishops 2nd} holds then
    rearranging yields $x-\frac{n-1}{2} = -\left( y-\frac{m-1}{2} \right)$.
    Thus $|x- \frac{n-1}{2}| = |y - \frac{m-1}{2}|$ and again $(x,y) \in
    N[(\frac{n-1}{2}, \frac{m-1}{2})]$. Therefore for all $v\in V(H)$, $f(v)$
    is not adjacent to $v$ and so $f$ is an opposition of $H$. So $G_B$ is
    almost opposable and the first player wins \textsc{snort} on $G_B$ by Lemma
    \ref{lem: a.o. implies 1st player win}.
\end{proof}

\begin{theorem}
    \label{thm: PR outcomes}
    The second player wins $\textsc{pr}(n,m)$ if and only if both $n$ and $m$
    are even.
\end{theorem}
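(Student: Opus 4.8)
The plan is to identify the $n\times m$ Rook's grid $G_R$ with the Cartesian product $K_n\square K_m$: as rooks, two squares attack each other exactly when they lie in a common row or a common column, so $(x_1,y_1)\sim(x_2,y_2)$ in $G_R$ if and only if $x_1=x_2$ or $y_1=y_2$ (with the squares distinct), which is precisely adjacency in $K_n\square K_m$. With this in hand the two directions of the biconditional are handled separately, using \cref{lem: opposable implies second player wins snort} for one and \cref{lem: a.o. implies 1st player win} for the other, together with the standard fact that no finite \textsc{snort} position is a draw.

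For the forward direction, assume $n$ and $m$ are both even. The key point is that although neither $K_n$ nor $K_m$ is itself opposable (in a complete graph $N[v]$ is everything, so \cref{cor: graph products preserve opposable} does not apply), their product is. Pick fixed-point-free involutions $\sigma$ of $\{0,\dots,n-1\}$ and $\tau$ of $\{0,\dots,m-1\}$, which exist since $n,m$ are even, and define $f((x,y))=(\sigma(x),\tau(y))$. I would then check the three opposition conditions: $f$ is an automorphism of $K_n\square K_m$ of order two (because $\sigma,\tau$ are order-two bijections and ``same row or same column'' is preserved and reflected coordinatewise), $f$ has no fixed point, and $f((x,y))$ shares neither the row nor the column of $(x,y)$, so $f((x,y))\notin N[(x,y)]$. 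Thus $f$ is an opposition and the second player wins $\textsc{pr}(n,m)$ by \cref{lem: opposable implies second player wins snort}.

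For the converse, I would show that if at least one of $n,m$ is odd then $G_R$ is almost opposable; the first player then wins by \cref{lem: a.o. implies 1st player win}, which (since \textsc{snort} has no draws) means the second player does not win. Assume without loss of generality that $n$ is odd. If $m$ is also odd, take $S=N_{G_R}[(0,0)]$, the union of row $0$ and column $0$; this is admissible (it is $N[u]$ for $u=(0,0)$), and $G_R-S$ is the Rook's grid on the remaining rows and columns, namely $K_{n-1}\square K_{m-1}$ with $n-1$ and $m-1$ both even, which is opposable by the construction of the previous paragraph. If instead $m$ is even, take $S=\{(0,y):0\le y\le m-1\}$, the single row $0$; this is contained in $N_{G_R}[(0,0)]$ and hence admissible, and $G_R-S\cong K_{n-1}\square K_m$ is again opposable because $n-1$ and $m$ are both even. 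In either case $G_R$ is almost opposable.

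The only real work is the batch of routine verifications---that $f$ is an automorphism and an opposition, and that each chosen $S$ is admissible and deletes to a smaller Rook's grid---so the main obstacle is just careful bookkeeping: confirming that removing a row (or a row together with a column) from $K_n\square K_m$ returns $K_{n-1}\square K_m$ (resp.\ $K_{n-1}\square K_{m-1}$) with rows and columns correctly reindexed, and disposing of the degenerate boundary cases $n=1$ or $m=1$ (where $G_R\cong K_m$ or $K_n$, and the first player wins immediately by colouring any vertex, since every other vertex is then tinted). Everything else is assembled from results already established above.
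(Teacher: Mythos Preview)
Your argument is correct and follows the paper's overall strategy: show $G_R$ is opposable when $n$ and $m$ are both even and almost opposable otherwise, then invoke \cref{lem: opposable implies second player wins snort} and \cref{lem: a.o. implies 1st player win}. The differences are in packaging rather than substance. You identify $G_R$ with $K_n\square K_m$ and use arbitrary fixed-point-free involutions $\sigma,\tau$ in place of the paper's explicit central reflection $(x,y)\mapsto(n-x-1,m-y-1)$; and in the odd cases you delete row $0$ (respectively $N[(0,0)]$) rather than the middle row (respectively $N[(\tfrac{n-1}{2},\tfrac{m-1}{2})]$). The one modest gain is that your deletion visibly leaves a smaller Rook's grid with both sides even, so the almost-opposable step reduces to the opposable case already established, whereas the paper writes down the opposition on $G_R-S$ directly in each subcase. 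Your separate handling of $n=1$ or $m=1$ is also necessary, since $K_m$ is not almost opposable, so that edge case genuinely needs the ad hoc argument you give.
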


\begin{proof}
    In the Rook's grid $G_R$, two vertices $(x_1, y_1)$ and $(x_2, y_2)$ are
    adjacent if and only if either $x_1 = x_2$ or $y_1 = y_2$. 

    Suppose both $n$ and $m$ are even. Define $f:V(G_R) \to V(G_R)$ by
    $f((x,y)) = (n-x-1, m-y-1)$ and note that $f$ is an automorphism of order
    two with no fixed points. Since $x=n-x-1$ and $y=m-y-1$ if and only if $n$
    and $m$ are odd, $f((x,y))$ is not adjacent to $(x,y)$ for all $(x,y) \in
    V(G_R)$. Therefore $f$ is an opposition and so by Lemma \ref{lem: opposable
    implies second player wins snort}, the second player wins \textsc{snort} on
    $G_R$. 

    Suppose instead that at least one of $n$ or $m$ are odd. We consider two
    cases, the first being exactly one of $n$ and $m$ are odd and the second
    being both $n$ and $m$ are odd. Without loss of generality, assume $n$ is
    odd and $m$ is even. Let $H = G_R - \{(\frac{n-1}{2}, y) \mid 0\leq y\leq
    m-1\}$. Note that $\{(\frac{n-1}{2}, y) \mid 0\leq y\leq m-1\} \subseteq
    N_{G_R}[(\frac{n-1}{2}, y)]$ for any $0\leq y\leq m-1$. Define $f:V(H) \to
    V(H)$ by $f((x,y)) = (n-x-1, y)$. Note that $f$ is an automorphism of order
    two with no fixed points. For any $0\leq x\leq n-1$, $x=n-x-1$ if and only
    if $x= \frac{n-1}{2}$. Therefore for all $(x,y) \in V(H)$, $(x,y)$ and
    $f((x,y)$ are not adjacent and so $f$ is an opposition. Thus by Lemma
    \ref{lem: a.o. implies 1st player win}, the first player wins
    \textsc{snort} on $G_R$. 

    Now suppose that both $n$ and $m$ are odd. Consider the automorphism $g$ on
    $G_R - N[(\frac{n-1}{2}, \frac{m-1}{2})]$ defined by $g((x,y)) = (n-x-1,
    m-y-1)$. Note that $g$ is an automorphism of order two with no fixed
    points. Since $(x,y)$ and $(n-x-1, m-x-1)$ are adjacent if and only if
    either $x= \frac{n-1}{2}$ or $y=\frac{m-1}{2}$, $g$ is an opposition of
    $G_R - N[(\frac{n-1}{2}, \frac{m-1}{2})]$. Therefore $G_R$ is almost
    opposable and so the first player wins \textsc{snort} on $G_R$ by Lemma
    \ref{lem: a.o. implies 1st player win}.
\end{proof}

Next, we determine the outcome of the Peaceable Queens game played on a board
with either an odd number of rows or an odd number of columns. 

\begin{theorem}
    \label{thm: PQ outcomes for odd x m boards}
    If either $n$ or $m$ are odd then the first player wins $\textsc{pq}(n,m)$.
\end{theorem}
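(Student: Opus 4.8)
The plan is to show that the Queen's grid $G_Q$ on an $n\times m$ board is almost opposable whenever $n$ or $m$ is odd; the theorem then follows at once from \cref{lem: a.o. implies 1st player win}. By symmetry we may assume $n$ is odd, and we split into the cases ``$m$ even'' and ``$m$ odd'', mirroring the structure of the Rook's grid argument in \cref{thm: PR outcomes} (and, for the diagonal bookkeeping, the Bishop's grid argument in \cref{thm: PB outcomes}). In both cases the candidate opposition will be the half-turn rotation $g\colon(x,y)\mapsto(n-x-1,\,m-y-1)$, which is an automorphism of order two; the only work is to choose an admissible set $S$ so that $g$ restricts to an opposition of $G_Q-S$, i.e.\ so that on $G_Q-S$ the map $g$ has no fixed point and never sends a vertex to one of its neighbours.

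I would first dispose of the case where $m$ is even by taking $S$ to be the central column $\{(\tfrac{n-1}{2},y)\mid 0\le y\le m-1\}$, which is admissible since it lies inside $N[(\tfrac{n-1}{2},0)]$. Here $g$ maps $V(G_Q-S)$ bijectively to itself (it fixes the central column setwise because $n-x-1=\tfrac{n-1}{2}\iff x=\tfrac{n-1}{2}$) and has no fixed point there. For non-adjacency, observe that $(x,y)$ and $g((x,y))$ share a row only if $y=m-y-1$, impossible as $m$ is even; share a column only if $x=\tfrac{n-1}{2}$, excluded; and share a diagonal only if $|2x+1-n|=|2y+1-m|$, impossible by parity since $2x+1-n$ is even ($n$ odd) while $2y+1-m$ is odd ($m$ even). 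For the case where $m$ is also odd, I would instead take $S=N[(\tfrac{n-1}{2},\tfrac{m-1}{2})]$, which is admissible by definition; since the half-turn $g$ fixes the central vertex, it preserves $N[(\tfrac{n-1}{2},\tfrac{m-1}{2})]$ and hence restricts to $G_Q-S$, with no fixed point there. If $(x,y)\in V(G_Q-S)$ and $(x,y),g((x,y))$ shared a row, column, or diagonal, then respectively $y=\tfrac{m-1}{2}$, or $x=\tfrac{n-1}{2}$, or $|x-\tfrac{n-1}{2}|=|y-\tfrac{m-1}{2}|$, and in every case $(x,y)$ lies in the central row, central column, or a central diagonal, hence in $N[(\tfrac{n-1}{2},\tfrac{m-1}{2})]=S$ --- a contradiction. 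Thus in both subcases $g$ is an opposition of $G_Q-S$, so $G_Q$ is almost opposable, and \cref{lem: a.o. implies 1st player win} gives the result.

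The main obstacle is conceptual rather than computational: the ``obvious'' mirror maps used for the knight, bishop, and rook --- a horizontal reflection $(x,y)\mapsto(n-x-1,y)$ or a vertical one --- simply fail for queens, because two squares related by such a reflection share a row or a column and therefore attack each other. One is forced to use the half-turn, and the real content is then the observation that deleting the central column (in the mixed-parity case) or the whole central cross-and-diagonals $N[(\tfrac{n-1}{2},\tfrac{m-1}{2})]$ (in the all-odd case) is exactly what is needed to kill all three ways a square and its $180^\circ$ image can line up; the parity argument that rules out diagonals in the mixed-parity case is the one genuinely new small wrinkle compared with the earlier piece-by-piece proofs.
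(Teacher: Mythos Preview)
Your proposal is correct and follows essentially the same approach as the paper: in both subcases you use the half-turn $(x,y)\mapsto(n-x-1,m-y-1)$, delete the central column when $m$ is even and $N[(\tfrac{n-1}{2},\tfrac{m-1}{2})]$ when $m$ is odd, and invoke \cref{lem: a.o. implies 1st player win}. If anything, your argument is slightly more self-contained --- the paper defers the non-adjacency checks to the computations in \cref{thm: PB outcomes} and \cref{thm: PR outcomes}, whereas you verify them directly (including the parity observation ruling out diagonals in the mixed-parity case).
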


\begin{proof}
    In the Queen's grid $G_Q$, two vertices $(x_1, y_1)$ and $(x_2, y_2)$ are
    adjacent if and only if either $x_1 = x_2$, $y_1 = y_2$, or $|x_1 - x_2| =
    |y_1 - y_2|$. Note that these conditions for adjacency are the same as
    testing for adjacency in the Rook's grid and the Bishop's grid
    simultaneously. 

    To show that the first player wins on $G_Q$ when either $n$ or $m$ are odd,
    we will show that $G_Q$ is almost opposable when either $n$ or $m$ are odd.
    Without loss of generality, suppose $n$ is odd and $m$ is even. Let $S =
    \{(\frac{n-1}{2},y) \mid 0\leq y\leq m\}$ and let $f: V(G_Q - S) \to V(G_Q
    - S)$ be defined by $f((x,y)) = (n-x-1, m-y-1)$. By the work done in the
    proofs of Theorem \ref{thm: PB outcomes} and Theorem \ref{thm: PR
    outcomes}, $(x,y)$ and $(n-x-1, m-y-1)$ are not adjacent in both the
    Bishop's grid and the Rook's grid for all $0\leq x\leq n-1$ and $0\leq
    y\leq m-1$. Therefore $(x,y)$ and $f((x,y))$ are not adjacent in the
    Queen's grid and so $f$ is an opposition of $G_Q - S$. 

    Now suppose both $n$ and $m$ are odd. Let $H = G_Q - N[(\frac{n-1}{2},
    \frac{m-1}{2})]$ and let $g: V(H) \to V(H)$ be defined by $g((x,y)) =
    (n-x-1, m-x-1)$. By the work done in the proofs of  Theorem \ref{thm: PB
    outcomes} and Theorem \ref{thm: PR outcomes}, $(x,y)$ and $g((x,y))$ are
    not adjacent for any $0\leq x\leq n-1$ and $0\leq y\leq m-1$. So $g$ is an
    opposition of $H$.

    Therefore, when at least one of $n$ or $m$ is odd, the Queen's grid is
    almost opposable and thus the first player wins $\textsc{pq}(n,m)$ by
    \cref{lem: a.o. implies 1st player win}.
\end{proof}

We believe that determining who wins the Peaceable Queens Game on $2n \times
2m$ chessboards cannot be done with the techniques used in this paper. Without
loss of generality, assume $n\leq m$. To see that the $2n \times 2m$ Queen's
grid is not opposable, suppose for a contradiction that $\alpha$ is an
opposition of the Queen's grid. The Queen's grid has four vertices of minimum
degree; $(0,0)$, $(2n-1,0)$, $(0,2m-1)$, and $(2n-1,2m-1)$. Thus $\alpha$ maps
these four vertices to each other. In the case where $n=m$, these four vertices
are all adjacent to each other and so we already have a contradiction. From
here we assume that $n < m$. Since $(0,0)$ is adjacent to $(2n-1,0)$ and
$(0,2m-1)$, $\alpha((0,0)) = (2n-1, 2m-1)$ and thus $\alpha((2n-1, 0)) =
(0,2m-1)$. To preserve adjacency, for each $1\leq x\leq 2n-1$ and $1\leq y\leq
2m-1$, $\alpha$ is forced to map $(x,0)$ to $(2n-x-1, 2m-1)$ and $(0,y)$ to
$(2n-1, 2m-y-1)$. Consequently, $\alpha$ is forced to map $(x,y)$ to $(2n-x-1,
2m-y-1)$ for all $0\leq x\leq 2n-1$ and $0\leq y\leq 2m-1$.

Let $A = \{(x, m-n+x) \mid 0\leq x\leq 2n-1\}$ and $B= \{(x, m+n-x-1) \mid
0\leq x\leq 2n-1\}$. Informally, $A$ and $B$ are each sets of $2n$ vertices and
the vertices of $A\cup B$ form the diagonals containing the four centre
vertices of the grid. Note that all of the vertices in $A$ are adjacent to each
other and all of the vertices in $B$ are adjacent to each other. However, by
the above argument, every vertex in $A$ is mapped by $\alpha$ to another vertex
in $A$. Similarly, $\alpha$ maps vertices in $B$ to other vertices in $B$. This
is a contradiction and so the $2n \times 2m$ Queen's grid is not opposable.

While it is easy to show that $2n \times 2m$ Queen's grids are not opposable,
proving that they are not almost opposable is more challenging. It appears that
they are not almost opposable (except in simple cases such as $n=m=1$), however
we leave proving this as an open problem.

\begin{problem}
    When is the $2n \times 2m$ Queen's grid almost opposable for $n,m \in
    \mathbb{Z}^+$?
\end{problem}

\section{Further Directions}
\label{sec: further directions}

Instead of having black and white queens like in the Peaceable Queens Game, we
could consider an impartial game where all queens are the same colour and the
two players are not allowed to place a queen that is within attacking range of
any queen that is already on the chessboard. Noon and Van Brummelen
\cite{noon:surreal, noon.van-brummelen:non-attacking} introduced and studied
this game on $n\times n$ chessboards and Brown and Ladha
\cite{brown.ladha:exploring} explored variations. Noon and Van Brummelen
\cite{noon:surreal, noon.van-brummelen:non-attacking} showed that on all
$n\times n$ boards for $1\leq n\leq 9$, the first player wins the impartial
game. However, the second player wins on the $10\times 10$ board. This leads
naturally to the question of whether the second player can ever win the
Peaceable Queens game on a $2n \times 2m$ board.

\begin{problem}
    Is it ever possible for the second player to win $\textsc{pq}(2n,2m)$?
\end{problem}

In \cite{beals.chang.ea:automorphisms}, it is shown that the function that
counts the number of automorphisms of a graph is $\exp(O(\sqrt{n\log
n}))$-enumerable. 
\begin{problem}
    What can be said about the enumerability of counting the number of
    oppositions of a graph? What about the number of ways a graph can be almost
    opposable?
\end{problem}
We note that counting perfect matchings (in possible service of counting
opposition matchings) is $\#\P$-complete \cite{valiant:complexity}.

One can see in multiple ways that the problem of deciding whether a given graph
is opposable is in $\NP$. Perhaps easiest is to non-deterministically guess an
opposition matching on the complement, and check that every pair of edges in
the matching induces a valid subgraph. Another way is to construct an auxiliary
graph $G'$ where edges in $G$ correspond to vertices in $G'$, and vertices in
$G'$ are adjacent if and only if the corresponding edges in $G$ induce a valid
subgraph within an opposition matching. Then opposition matchings correspond to
$|V(G)|/2$-cliques in $G'$, and $\lang{CLIQUE}$ is an $\NP$-complete problem.
Similarly, almost-opposability is also in $\NP$, as one can
non-deterministically guess a partial neighbourhood and opposition matching on
the resulting graph, then verify the validity of these guesses. 

The fact that (almost) opposability is in $\NP$ contrasts with the problem of
identifying whether a given \textsc{snort} position is winning being
$\PSPACE$-complete \cite{schaefer:complexity}. To correct this dissonance, we
observe that a \textsc{snort} strategy based around (almost) opposability is
one where each move relies on only (respectively at most) the previous move.
However, when reducing from $\lang{QBF}$ to show $\PSPACE$-hardness, strategies
may observe the entire history of plays. As a final note on complexity, it is
known that deciding if a graph has a perfect matching is solvable in $P$ time
by contracting odd cycles to a point and using the Ford--Fulkerson algorithm to
solve a max-flow problem across the resulting bipartite graph
\cite{edmonds:paths}. However, these natural techniques do not guarantee that
the resulting matchings satisfy the conditions of opposition matchings. Further
complicating the issue, graphs such as $P_2\square P_{2n}$ yield both
opposition matchings (lift opposition matching of $P_2$) and non-opposition
perfect matchings (lift perfect non-opposition matching of $P_{2n}$). We leave
open the issue of complexity of identifying if a graph is (almost) opposable.

Next, we mention two combinatorial games played on graphs where determining the
structural properties of first player winning graphs and second player winning
graphs remains open. 

Impartial \textsc{snort}, also written as \textsc{isnort}, is played with a
nearly identical ruleset to \textsc{snort} with the only difference being that
both players have access to both colours instead of just one colour each.
Despite the similarities of \textsc{snort} and \textsc{isnort}, not all of our
results hold for \textsc{isnort}. Uiterwijk \cite{uiterwijk:solving2} showed
that the second player wins \textsc{isnort} on $P_n \square P_m$ if and only if
either $n$ or $m$ is even. In the case where at most one of $n$ and $m$ are
even, we know the first player wins \textsc{snort} on $P_n \square P_m$ by
\cref{cor: k-dim Cartesian grids}.

\begin{problem}
    For which graphs does the second player win \textsc{snort} and the first
    player win \textsc{isnort}?
\end{problem}

A combinatorial game for digraphs called \textsc{digraph placement} was
recently introduced in \cite{clow.mckay:digraph}. In \textsc{digraph
placement}, the vertices of a given digraph are coloured either red or blue.
The two players, Left and Right, take turns deleting vertices along with their
out-neighbourhoods. Left is only allowed to delete blue vertices and Right is
only allowed to delete red vertices. Just as we did with \textsc{snort} in this
paper, we can ask questions regarding the structural properties of digraphs
where a player wins \textsc{digraph placement} by using a ``mirroring''
strategy. 

\begin{problem}
    What structural properties for digraphs yield simple winning strategies in
    \textsc{digraph placement} for either the first or second player?
\end{problem}

\bibliographystyle{plainurl}
\bibliography{bib}

\end{document}